\newtheorem{theorem}{Theorem}
\newtheorem{corollary}[theorem]{Corollary}
\newtheorem{definition}[theorem]{Definition}
\newtheorem{lemma}[theorem]{Lemma}
\newtheorem{proposition}[theorem]{Proposition}
\newtheorem{remark}[theorem]{Remark}
\newenvironment{proof}[1][Proof]{\textbf{#1.} }{\ \rule{0.5em}{0.5em}}
\newcommand{\eg}{eg}
\newcommand{\ie}{ie}
\renewcommand{\epsilon}{\varepsilon}
\newcommand{\catname}[1]{\mathcal{#1}}
\newcommand{\baseS}{\catname{S}}             
\newcommand{\Rat}{\mathbb{Q}}                
\newcommand{\Reals}{\mathbb{R}}              
\newcommand{\oftype}{\mathord{:}}
\newcommand{\Id}{\mathop{\mathsf{Id}}}
\newcommand{\fin}{\mathcal{F}}              
\newcommand{\PL}{P_L}                       
\newcommand{\PU}{P_U}                       
\newcommand{\PV}{V}                         
\newcommand{\PCV}{\PV^{c}}                  
\newcommand{\HBC}{\mathsf{HB}_{C}}          
\newcommand{\HeineBorelC}{\mathsf{HB}_C}     
\newcommand{\lint}{\mathop{\underline{\int}}} 
\newcommand{\uint}{\mathop{\overline{\int}}} 
\newcommand{\Leb}[2]{\lambda_{#1 #2}}       
\newcommand{\Unif}[2]{\upsilon_{#1 #2}}       
\newcommand{\tminus}{\mathbin{\dot{-}}}     
\newcommand{\Low}{\mathord{L}}     
\newcommand{\Up}{\mathord{R}}     
\newcommand{\Val}{\mathfrak{V}}            
\newcommand{\Coval}{\mathfrak{C}}          
\newcommand{\slope}[1]{{#1}^{\langle 1 \rangle}} 
\newcommand{\Lintapprox}{\underline{I}}
\newcommand{\Uintapprox}{\overline{I}}
\begin{document}

\title{The Fundamental Theorem of Calculus point-free, with applications to exponentials and logarithms}
\author{Steven Vickers
        \\School of Computer Science, University of Birmingham
        \\ \texttt{s.j.vickers.1@bham.ac.uk}}

\maketitle
\begin{abstract}
	Working in point-free topology under the constraints of geometric logic, we prove the Fundamental Theorem of Calculus, and apply it to prove the usual rules for the derivatives of $x^\alpha$, $\gamma^x$, and $\log_\gamma x$.

  MSC2020-class:
  26E40 
    (Primary)
  26A24, 
  26A36,  
  18F70,  
  18F10 
    (Secondary)
  
  Keywords: Point-free topology, geometric logic, locales, differentiation, antidifferentiation
\end{abstract}

\section{Introduction} \label{sec:Intro}
This paper is part of a programme of developing real analysis in point-free topology, using the point-based geometric style (in the sense of geometric logic) as in~\cite{NgVic:PtfreeRE}: all constructions are to be geometric, and, consequently, all maps are continuous.
By this means, results of locale theory can be proved without reference to frames of opens.
As far as possible, the aim is to use arguments (and the terminology) already familiar from classical point-set topology.

In~\cite{NgVic:PtfreeRE}, Ng and Vickers gave a point-free account of real exponentiation $\gamma^x$ and logarithms $\log_\gamma x$, together with the expected algebraic laws.
However, they did not cover differentiation, and the motivation for this paper is to fill that gap.
We show, as expected, that $\frac{d}{dx}x^\alpha = \alpha x^{\alpha-1}$, and (with the natural logarithm $\ln x$ defined as $\int_1^x \frac{dt}{t}$) that 
$\frac{d}{dx}\gamma^x = \ln\gamma\, \gamma^x$
and $\frac{d}{dx}\log_\gamma x = 1/x \ln\gamma$.

Much of the working is familiar from classical treatments, albeit with careful choice of the line of argument.
Using the Fundamental Theorem of Calculus (FTC), the differentiability of $\log_\gamma$ is arrived at by studying the integral $\int dt/t$, and then for that of exponentials we can argue with the chain rule.

Our central result, then, is FTC (Theorems~\ref{thm:FTC1} and~\ref{thm:FTC2}).
In fact, this comes out of a relatively simple observation (though I haven't see it before): the derivative of $\int_{x_0}^x g(t) dt$ is obtained from case $x=y$ of the integral $\int_{x}^{y} g d\Unif{x}{y}$, where $\Unif{x}{y}$ is the uniform probability valuation on $[x,y]$.
The main burden of the point-free proof (Theorem~\ref{thm:2Int}) is to show how we can get well behaved 2-sided integrals $\int g d\mu$.
(The 1-sided lower and upper integrals are relatively well understood from~\cite{Integration}.)

\section{Background} \label{sec:BG}
The point-based style of reasoning for point-free spaces, as used for point-free real analysis in~\cite{NgVic:PtfreeRE}, was initiated in~\cite{TopCat}, and has been summarized more recently in~\cite{Vickers:PtfreePtwise}.
In it a \emph{space} is described by a geometric theory (of the points), and a \emph{map} is described by a geometric construction of points of one space out of points of another.
It also allows a natural fibrewise treatment of \emph{bundles}, as maps transforming base points to fibres.

The geometricity ensures that the reasoning applies not only to the global points (models of the theory in a base topos $\baseS$) but also to the generalized points (models in bounded $\baseS$-toposes),
and this circumvents the incompleteness of geometric logic.
This allows a point-based reasoning style like that of ordinary mathematics.
In fact it does better, in that no continuity proofs are needed, and in its treatment of bundles.

For real analysis, the most pervasive differences are an attention to one-sided reals (lower or upper) as well as the Dedekind reals (see Section~\ref{sec:1sidedReals}), and an unexpectedly prominent role for hyperspaces (Section~\ref{sec:Hyperspaces}).

\subsection{Geometric tricks} \label{sec:Tricks}

Here we explain some techniques of geometric reasoning that need a little justification.

The first is that of \textbf{fixing a variable}. It looks like an application of currying, but is in a setting that is not cartesian closed.
Even classically it is not valid, as argumentwise continuity is not enough to prove joint continuity.
It depends on the geometricity.

Suppose we wish to define a map $f\colon X\times Y \to Z$.
We shall commonly \emph{fix} $x\oftype X$, and then define the curried map $f_x\colon Y \to Z$, $y \mapsto f(x,y)$.
However, we are not defining a map from $X$ to a function space $Y\to Z$ -- which need not exist in general.
To ``fix'' $x$ is to work in the topos of sheaves $\baseS X$, where, internally, we define a map $Y\to Z$. Externally, this is a bundle map (over $X$) from $X\times Y$ to $X\times Z$, and this gives us our map from $X\times Y$ to $Z$.

Next, we have a couple of tricks that look like \textbf{case-splitting}.

\begin{theorem} \label{thm:cases}
  Let $X$ be a space, and $X'$ a subspace -- defined by adjoining additional geometric axioms (sequents) to those for $X$.
  
  Let $U$ be an open of $X$, with closed complement $X-U$.
  Then if $U$ and $X-U$ are both contained in $X'$, then $X'$ is the whole of $X$.
\end{theorem}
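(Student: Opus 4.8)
The plan is to reduce the statement to a single fact -- that an open subspace together with its closed complement generates the whole space -- and then finish by a formal lattice argument. Let $X''$ be the least subspace of $X$ containing both $U$ and $X-U$; this exists because subspaces cut out by adjoining geometric sequents are closed under intersection (take the union of their axiom sets). By hypothesis $X' \supseteq X''$, so it suffices to prove $X'' = X$. It should be stressed that this cannot be done pointwise: it is \emph{not} the case that every generalized point of $X$ lies in $U$ or in $X-U$. For $X=\Sierp$ with $U$ the open point, the generic point lies in neither $U$ nor its closed complement (their ``union'' as sublocales being nevertheless all of $\Sierp$). So the content is about how subspaces combine, not about a case split on points -- which is also why this little theorem is worth isolating.

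To prove $X''=X$ I would pass briefly to the frame $\mathcal O X$ (equivalently, to the subobject classifier of $\mathcal S X$). Subspaces of $X$ of the kind considered correspond to nuclei on $\mathcal O X$ (Lawvere--Tierney topologies on $\mathcal S X$), with $X$ itself the identity nucleus, with inclusion of subspaces \emph{reversing} the pointwise order of nuclei, and with the least subspace containing two subspaces corresponding to the pointwise meet of the two nuclei. Now $U$ corresponds to some $u \in \mathcal O X$ and the nucleus $j_U(a) = (u \to a)$, while its closed complement $X-U$ corresponds to $j_C(a) = (u\vee a)$. For every $a\in\mathcal O X$,
\[
  (u\to a)\wedge(u\vee a) = \bigl((u\to a)\wedge u\bigr)\vee\bigl((u\to a)\wedge a\bigr) \le a\vee a = a ,
\]
using distributivity and $(u\to a)\wedge u\le a$ in the Heyting algebra $\mathcal O X$; the reverse inequality is trivial, so $j_U\wedge j_C$ is the identity nucleus, i.e.\ $X''=X$.

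It only remains to assemble the pieces: if $j'$ is the nucleus of $X'$ then $U\subseteq X'$ and $X-U\subseteq X'$ give $j'\le j_U$ and $j'\le j_C$, hence $j'\le j_U\wedge j_C=\mathrm{id}$, and since the identity is the least nucleus, $j'=\mathrm{id}$, that is, $X'=X$. (In the non-propositional case one reads ``open of $X$'' as the corresponding geometric formula, hence as a subterminal object of $\mathcal S X$, and the identity above is used internally in $\Omega_{\mathcal S X}$; nothing changes.) The main obstacle I expect is not the computation, which is the whole mathematical input and is one line, but the slight tension with the paper's frame-free, point-based methodology: one has to invoke a morsel of locale/topos theory (nuclei, or Lawvere--Tierney topologies, and the description of joins of subspaces), and one has to have fixed precisely what ``closed complement $X-U$'' means -- namely the closed sublocale complementary to the open $U$ -- for the Heyting identity to apply.
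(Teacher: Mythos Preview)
Your argument is correct and follows essentially the same line as the paper's own proof: both rest on the fact that an open sublocale and its closed complement join to the whole space in the lattice of subspaces. The paper simply cites this as the well-known statement that $U$ and $X-U$ are Boolean complements in the subspace lattice (referring to \cite{SublocFT}), while you supply a self-contained proof of that join via the Heyting identity $(u\to a)\wedge(u\vee a)=a$ for nuclei; your extra commentary on why a naive pointwise case-split fails is accurate and matches the paper's motivation for isolating the result.
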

\begin{proof}
  $U$ and $X-U$ are Boolean complements in the lattice of subspaces of $X$.
  This is well known, but see~\cite{SublocFT} for a discussion tailored to the geometric view.
\end{proof}

Thus in some circumstances, we can reason as if every point of $X$ is in either $U$ or $X-U$, even though that is not true in general.

\begin{theorem} \label{thm:RealPushout}
  The following diagram is a pushout, stable under pullback.
  \[
    \begin{tikzcd}
      1
        \ar[r, "0"]
        \ar[d, "0"']
      & {[0, \infty)}
        \ar[d]
      \\  
      (-\infty, 0]
        \ar[r]
      & \Reals
    \end{tikzcd}
  \]
\end{theorem}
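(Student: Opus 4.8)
The plan is to recognise the square as an instance of the closed pasting lemma for locales, from which pullback-stability can then be read off. Note first that one cannot glue along the \emph{open} half-lines $(0,\infty)$ and $(-\infty,0)$, since these do not cover $\Reals$; the two halves $[0,\infty)$ and $(-\infty,0]$ must instead be treated as \emph{closed} sublocales. Concretely, in $\Reals$ we have $(-\infty,0] = C((0,\infty))$, the closed complement of the open $(0,\infty)$, and $[0,\infty) = C((-\infty,0))$. Since $w \mapsto C(w)$ turns joins of opens into meets of closed sublocales and meets of opens into joins, $C((0,\infty)) \vee C((-\infty,0)) = C\bigl((0,\infty) \wedge (-\infty,0)\bigr) = C(\emptyset) = \Reals$, whereas $C((0,\infty)) \cap C((-\infty,0)) = C\bigl((0,\infty) \vee (-\infty,0)\bigr) = C(\Reals \setminus \{0\})$, which --- $\Reals$ being $T_1$ --- is the point $0 \colon 1 \to \Reals$; moreover the two inclusions of this point into $(-\infty,0]$ and $[0,\infty)$ are exactly the two maps labelled $0$ in the diagram (e.g.\ $C(\Reals\setminus\{0\}) \subseteq C((-\infty,0)) = [0,\infty)$ because $(-\infty,0) \le \Reals\setminus\{0\}$). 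So, up to this identification, the square is
\[
\begin{tikzcd}
  C(u)\cap C(v) \ar[r] \ar[d] & C(v) \ar[d] \\
  C(u) \ar[r] & C(u)\vee C(v)
\end{tikzcd}
\]
for $u = (0,\infty)$ and $v = (-\infty,0)$, and it suffices to show that such a square is always a pushout in $\Loc$, stable under pullback.

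For the pushout I would pass to frames --- the one concession, of the same kind as the appeal to the sublocale lattice in the proof of Theorem~\ref{thm:cases}. Using $\mathcal{O}(C(w)) \cong {\uparrow}w$ (meets and joins inherited from $\mathcal{O}(X)$, least element $w$), the square becomes a square of frames on the vertices ${\uparrow}(u\vee v)$, ${\uparrow}u$, ${\uparrow}v$, ${\uparrow}(u\wedge v)$, all of whose maps have the form $a \mapsto a\vee u$ or $a \mapsto a\vee v$. I would then check that ${\uparrow}(u\wedge v)$ is the pullback ${\uparrow}u \times_{{\uparrow}(u\vee v)} {\uparrow}v$: the comparison map $c \mapsto (c\vee u,\, c\vee v)$ has two-sided inverse $(a,b)\mapsto a\wedge b$, each composite collapsing to the identity by a single use of distributivity and absorption (using $a \ge u$, $b \ge v$ and the compatibility relation $a\vee v = b\vee u$); as it plainly preserves finite meets, arbitrary joins and $\top$, and a bijective frame homomorphism is an isomorphism, the frame square is a pullback, so the original square is a pushout in $\Loc$.

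For pullback-stability I would pull the square back along an arbitrary $g\colon Y \to \Reals$. Preimage takes closed sublocales to closed sublocales, with $g^{*}C(w) = C(g^{*}w)$, and $g^{*}$ is a frame homomorphism on opens; together with the identities $C(w_1)\vee C(w_2) = C(w_1\wedge w_2)$ and $C(w_1)\cap C(w_2) = C(w_1\vee w_2)$ this makes the pulled-back square the closed-pasting square for $Y$ with the opens $g^{*}(0,\infty)$ and $g^{*}(-\infty,0)$ (noting $g^{*}(0,\infty) \wedge g^{*}(-\infty,0) = g^{*}\emptyset = \emptyset$). So it too is a pushout, by the case just proved.

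The main obstacle is not a single calculation --- the frame pullback check is a few lines --- but keeping the bookkeeping of the sublocale lattice straight: remembering that $w \mapsto C(w)$ reverses order, so that it is the \emph{meet} $u\wedge v = \emptyset$ of the opens that produces the join $C(u)\vee C(v) = \Reals$; and verifying that the maps $1 \to (-\infty,0]$ and $1 \to [0,\infty)$ that the identification forces on the corner really are the point $0$ rather than some other point. Everything else is formal.
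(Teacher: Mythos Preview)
Your argument is correct: the closed-sublocale gluing lemma you prove (for arbitrary opens $u,v$ in any locale) specialises to the square in the statement, and your pullback-stability step is sound because inverse image of closed sublocales is closed and $g^{\ast}$ preserves the lattice operations on opens. The frame calculation checks out; in particular $(a\wedge b)\vee u = (a\vee u)\wedge(b\vee u) = a\wedge(a\vee v) = a$ uses exactly the compatibility $b\vee u = a\vee v$ and absorption, as you say.

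The paper, by contrast, gives no argument at all: it simply observes that \cite[Section~1.4]{NgVic:PtfreeRE} proves the analogous pushout for $(0,1]$ and $[1,\infty)$ glued at $1$, and asserts that the same proof works here. So you have supplied a self-contained proof where the paper only points to one. Your route via a general closed-pasting lemma is the natural locale-theoretic approach and is almost certainly what the cited reference does too (modulo the specific opens chosen); the advantage of your formulation is that pullback-stability falls out immediately from the generality, since the pulled-back square is another instance of the same lemma. The one stylistic mismatch is that the paper's programme tries to avoid explicit frame manipulations, but you rightly flag this as a concession on a par with the sublocale-lattice appeal in Theorem~\ref{thm:cases}, and for a foundational pushout statement of this kind it is entirely appropriate.
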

\begin{proof}
  \cite[Section 1.4]{NgVic:PtfreeRE} proves the analogous result for combining $(0,1]$ and $[1,\infty)$. The proof here is essentially the same.
\end{proof}

Thus we can define a map from $\Reals$ by splitting into the non-negative and non-positive cases, and checking that they agree on 0.

\begin{corollary} \label{cor:R2Pushout}
  Let $\leq$ and $\geq$ be the numerical orders on $\Reals$, treated as subspaces of $\Reals^2$.
  $\Reals$ maps into both of them by the diagonal map to $\Reals^2$.
  Then the following diagram is a pushout, stable under pullback.
  \[
    \begin{tikzcd}
      \Reals
        \ar[r]
        \ar[d]
      & \geq
        \ar[d]
      \\  
      \leq
        \ar[r]
      & \Reals^2
    \end{tikzcd}
  \]
\end{corollary}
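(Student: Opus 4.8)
The plan is to obtain this square as a pullback of the pushout square of Theorem~\ref{thm:RealPushout}, exploiting that the latter is stable under pullback. The map to pull back along is the difference map $\delta\colon\Reals^2\to\Reals$, $(x,y)\mapsto x-y$, which is geometric because it is assembled from subtraction of Dedekind reals.

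First I would pull the cospan $[0,\infty)\to\Reals\leftarrow(-\infty,0]$ of Theorem~\ref{thm:RealPushout} back along $\delta$. Since $[0,\infty)$, $(-\infty,0]$ and $1=\{0\}$ are sublocales of $\Reals$, pulling back replaces them by the sublocales $\delta^{-1}[0,\infty)$, $\delta^{-1}(-\infty,0]$ and $\delta^{-1}\{0\}$ of $\Reals^2$. The identifications to make are that $\delta^{-1}[0,\infty)$ is the subspace $\geq$ (since $x-y\ge 0$ is, by definition of the numerical order, $x\ge y$), that $\delta^{-1}(-\infty,0]$ is $\leq$, and that $\delta^{-1}\{0\}$ is the diagonal $\{x=y\}$, canonically isomorphic to $\Reals$. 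Under these identifications the two legs of the pulled-back cone are the sublocale inclusions $\{x=y\}\hookrightarrow{\geq}$ and $\{x=y\}\hookrightarrow{\leq}$, which are exactly the diagonal maps $\Reals\to{\geq}$ and $\Reals\to{\leq}$ named in the statement; so the pulled-back square is precisely the square of the corollary. It is a pushout because the square of Theorem~\ref{thm:RealPushout} is stable under pullback, and it is itself stable under pullback because pullback squares compose: pulling it back further along a map $h$ agrees with pulling Theorem~\ref{thm:RealPushout}'s square back along $\delta\circ h$.

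I expect the only point that needs a little care — and it really is little — to be the verification that $\delta^{-1}$ of a closed half-line recovers the corresponding order subspace as a \emph{sublocale} of $\Reals^2$, not merely as a space with the same points: one checks that adjoining the axiom $x\ge y$ and adjoining $x-y\ge 0$ cut out the same sublocale, which is immediate from the way the order on $\Reals$ is set up in~\cite{NgVic:PtfreeRE} in terms of subtraction. Equivalently, and perhaps more transparently, one may first apply the shear homeomorphism $(x,y)\mapsto(x-y,y)$ of $\Reals^2$, which carries $\geq$, $\leq$ and the diagonal to $[0,\infty)\times\Reals$, $(-\infty,0]\times\Reals$ and $\{0\}\times\Reals$ respectively; under it the corollary's square becomes the pullback of the square of Theorem~\ref{thm:RealPushout} along the first projection $\Reals^2\to\Reals$, again a pushout stable under pullback.
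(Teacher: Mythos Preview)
Your proof is correct and is essentially the paper's own argument: the paper applies the coordinate change $(x,y)\mapsto(u,v)=(x+y,x-y)$ and then ``fixes $u$'' to reduce to Theorem~\ref{thm:RealPushout}, which amounts to pulling Theorem~\ref{thm:RealPushout} back along the $v$-projection --- and that $v$-projection is exactly your difference map $\delta(x,y)=x-y$. Your alternative shear $(x,y)\mapsto(x-y,y)$ is an equally valid trivializing homeomorphism, differing from the paper's only in the choice of complementary coordinate.
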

\begin{proof}
  $\Reals^2$ is isomorphic to itself, by the map $(x,y)\mapsto (u=x+y, v=x-y)$.
  Suppose we are trying to define $f(x,y)$ for the two cases $x\geq y$ and $x \leq y$.
  Fixing $u$, define $g_u(v) = f((u+v)/2, (u-v)/2)$.
  Then the two cases $x\geq y$ and $x\leq y$ correspond to $v\geq 0$ and $v \leq 0$. We can now use Theorem~\ref{thm:RealPushout} to define $g_u$, and we get the required $f(x,y)=g(x+y,x-y)$.
\end{proof}

\subsection{One-sided reals} \label{sec:1sidedReals}

The one-sided reals are lower (approximated from below, corresponding to the topology of lower semi-continuity), or upper (from above, upper semi-continuity).
They are discussed in some detail in~\cite{NgVic:PtfreeRE}, whose notation we shall largely follow.
In the present paper, they play an important role as lower and upper integrals.

We shall most commonly denote spaces of 1-sided reals by a standard notation for 2-sided reals, with a right or left arrow on top for lower or upper reals.
This is motivated by the fact that, unlike the Dedekind reals, they have non-trivial specialization order.
It corresponds to numerical $\leq$ for the lower reals, $\geq$ for the upper.
Then the direction of the arrow in our notation shows the numerical direction of upward specialization order.

A Dedekind (2-sided) real is a pair $x=(\underline{x}, \overline{x})\oftype \overrightarrow{(-\infty,\infty]}
\times \overleftarrow{[-\infty,\infty)}$ satisfying the axioms
\begin{align}
    &\text{disjoint: }
    & 
    \overline{x} < q < \underline{x} &\vdash_{q\oftype\Rat} \bot
    \label{eq:DedAxiom1}
    \\
    &\text{located: }
    &
    q < r &\vdash_{qr\oftype\Rat} 
    q < \underline{x} \vee \overline{x} < r
    \label{eq:DedAxiom2}
\end{align}
We write
$\Low\colon\Reals\to \overrightarrow{(-\infty,\infty]}$
and
$\Up\colon\Reals\to \overleftarrow{[-\infty,\infty)}$
for the maps that extract the lower and upper (or left and right) parts of a Dedekind real.

\subsection{Hyperspaces%
    \protect\footnote{In earlier literature they are known as \emph{powerlocales.} We've changed this in line with our policy of reusing standard topological terminology for the point-free setting.}}
\label{sec:Hyperspaces}
The most conspicuous use of hyperspaces (spaces of subspaces) in the present paper lies in the fact that the interval $[x,y]$ of integration $\int_{x}^{y}$ needs to be geometrically definable from $x$ and $y$.
This is done by treating it as a point in a hyperspace.

However, their roots in point-free analysis go much deeper, and~\cite{CViet} uses them to prove a version of Rolle's Theorem.

Various hyperspaces are known point-free.
The first was the Vietoris $\PV$~\cite{VietLoc}, followed by the lower and upper, $\PL$ and $\PU$.
For more information and references see~\cite{PowerPt}.
They were originally defined in terms of frames, but their geometricity was proved in~\cite{PPExp}.

Finally, the connected Vietoris hyperspace $\PCV$ was defined in~\cite{CViet}, with a proof that the points of $\PCV\Reals$ are equivalent to closed intervals $[x,y]$ ($x\leq y$).
This uses a \emph{Heine-Borel map,} an isomorphism, $\HeineBorelC\colon\mathord{\leq} \to \PCV\Reals$.

If we write $P$ for any of the hyperspaces, then for each point of $PX$ we can define, geometrically, a subspace of $X$. From this we can define a \emph{tautologous bundle} over $PX$.
Following the notational ideas of~\cite{Vickers:PtfreePtwise}, we shall write this as
$\sum_{K\oftype PX} K \to PX$.
The bundle space $\sum_{K\oftype PX} K$ is a subspace of $PX \times X$.

\subsection{Differentiation} \label{sec:Diff}
To define derivatives point-free, we shall follow~\cite{CViet} in using the Carath\'eodory definition, a style developed in some generality in~\cite{BertramGN:DiffCalcGBFR}. For this, $f(x)$ is differentiable if there is a \emph{slope} map $\slope{f}(x,y)$ (continuous, as are all maps point-free) satisfying
$f(y)-f(x) = (y-x) \slope{f}(x,y)$.
Then the derivative $f'(x)$ is $\slope{f}(x,x)$.
Note that $f'$ is necessarily continuous: differentiable here means \emph{continuously} differentiable ($C^1$).

By calculations that are completely familiar, we see that the class of differentiable maps contains constant maps and the identity, and is closed under various operations.

\[ \begin{split}
  \slope{(x\mapsto c)}(x,y) &= 0 \\
  \slope{\Id}(x,y) &= 1 \\
  \slope{(f+g)}(x,y) &= \slope{f}(x,y) + \slope{g}(x,y) \\
  \slope{(fg)}(x,y) &= \slope{f}(x,y)g(y) + f(x)\slope{g}(x,y)\\
  \slope{\left( \frac{1}{f} \right)}(x,y) &= -\frac{\slope{f}(x,y)}{f(x)f(y)} \quad \text{(if every $f(x)$ non-zero)} \\
  \slope{(f\circ g)}(x,y) &= \slope{f}(g(x),g(y)) \slope{g}(x,y) \quad \text{\emph{(Chain Rule)}}
\end{split} \]

\begin{lemma} \label{lem:invdiff}
  Suppose $f$ and $g$ are mutually inverse maps (between open subspaces of $\Reals$), and $f$ is differentiable with $\slope{f}(x',y')\ne 0$ for all $x',y'$.
  Then $g$ is differentiable, with $\slope{g}(x,y) = 1/\slope{f}(g(x),g(y))$ and $g'(x) = 1/f'(g(x))$.
\end{lemma}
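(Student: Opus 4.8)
The plan is to exhibit the claimed slope map directly and then check the Carath\'eodory identity, the only point needing real care being that the construction stays geometric. Take as candidate
$\slope{g}(x,y) := 1/\slope{f}(g(x),g(y))$.
This is a legitimate map: $g$ is a map, hence continuous, so $(x,y)\mapsto(g(x),g(y))$ composes with $\slope{f}$ to give a map into $\Reals$; the hypothesis that $\slope{f}$ is nowhere zero is to be read geometrically as saying that this composite factors through the open subspace of reals apart from $0$, on which reciprocation $r\mapsto 1/r$ is a map. (This is also where openness of the domains of $f$ and $g$ gets used.) No separate continuity check is needed, since all point-free maps are automatically continuous.

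Next I would verify $g(y)-g(x) = (y-x)\,\slope{g}(x,y)$. Apply the defining equation of $\slope{f}$ at the pair $(x',y')=(g(x),g(y))$, which lies in the domain of $\slope{f}$ since $g$ lands in the domain of $f$:
\[
  f(g(y)) - f(g(x)) = (g(y)-g(x))\,\slope{f}(g(x),g(y)).
\]
Because $f$ and $g$ are mutually inverse, the left-hand side is $y-x$; and because $\slope{f}(g(x),g(y))$ is invertible we may divide through, obtaining $g(y)-g(x) = (y-x)/\slope{f}(g(x),g(y))$, which is exactly the Carath\'eodory identity for $g$ with the slope chosen above. Hence $g$ is differentiable. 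The derivative formula is then just the diagonal case: $g'(x) = \slope{g}(x,x) = 1/\slope{f}(g(x),g(x)) = 1/f'(g(x))$.

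I do not expect a serious obstacle here; the one thing that deserves attention rather than routine calculation is the geometric legitimacy of the reciprocal — i.e.\ that ``$\slope{f}$ never zero'' is precisely the hypothesis that turns $1/\slope{f}(g(x),g(y))$ into a map of spaces. Everything else is the classical one-line inverse-function argument, and we are only asked to exhibit one slope for $g$, not to characterize all of them, so no uniqueness discussion is required.
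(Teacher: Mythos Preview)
Your proof is correct and follows essentially the same approach as the paper: substitute $(x',y')=(g(x),g(y))$ into the Carath\'eodory identity for $f$, use $f\circ g=\Id$, and divide by the nonvanishing slope. You spell out more carefully why the reciprocal is a legitimate geometric map, which the paper leaves implicit, but the argument is the same one-line inverse-function computation.
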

\begin{proof}
  From $f(y')-f(x') = (y'-x')\slope{f}(x',y')$ we deduce
  $y-x = (g(y)-g(x))\slope{f}(g(x),g(y))$.
\end{proof}

In all these examples the slope maps can be defined explicitly.
The value of the FTC is that it enables us to define slope maps as integrals, once we know what to expect for the derivative.

\subsection{Preliminaries on integration} \label{sec:Int}

Let us first recall some of the point-free theory of integration.
We follow the account of Vickers~\cite{Integration}, with some reference also to that of Coquand and Spitter~\cite{CoqSpit:IntVal}.
Vickers's account is very general, with integration over an arbitrary locale $X$.

They make careful use of one-sided reals, lower (approximated from below, topology of lower semi-continuity) or upper (from above, upper semi-continuity).

\subsubsection*{Lower integrals} \label{sec:LowerInt}

For a lower integral $\lint_X f d\mu$, both integrand and measure are non-negative lower reals.
More specifically, the measure is a \emph{valuation,} a Scott continuous map $\mu\colon \Omega X \to \overrightarrow{[0,\infty]}$
with $\mu\emptyset=0$ and satisfying the modular law
$\mu U + \mu V = \mu(U\vee V) + \mu(U\wedge V)$.
The valuations on $X$ form a space $\Val X$,
and $\Val$ is a functor -- indeed, a monad.

$\mu$ is a \emph{probability valuation} if, in addition, $\mu X=1$, and, more generally, it is \emph{finite} if $\mu X$ is a Dedekind real.
(More carefully, a finite valuation is a pair $(\mu,a)$ where $\mu$ is a valuation, and $a$ is a Dedekind real for which $\Low a = \mu X$.)

The lower integral $\lint_X f d\mu$ is defined in two steps.
First, it is reformulated as ${\lint}_{\overrightarrow{[0,\infty]}} \Id d\Val f(\mu)$,
so that we may assume without loss of generality that $X=\overrightarrow{[0,\infty]}$ and $f$ is the identity;
and then we define
${\lint}_{\overrightarrow{[0,\infty]}} \Id d\mu$
where $\mu$ is now a valuation on $\overrightarrow{[0,\infty]}$.
Combining them, we get $\lint_X f d\mu$ as the supremum of expressions
\begin{equation*} \label{eq:LowInt}
  \Lintapprox(r_i)_i = \sum_{i=1}^n (r_i - r_{i-1})\mu f^\ast\overrightarrow{(r_i,\infty]}
\end{equation*}
over rational sequences $0=r_0 < \cdots < r_n$ ($n \geq 1$).
In effect, this is defining a Choquet integral. See Figure~\ref{fig:lower}.

\begin{figure}
  \begin{center}
    \includegraphics[width=.65\textwidth]{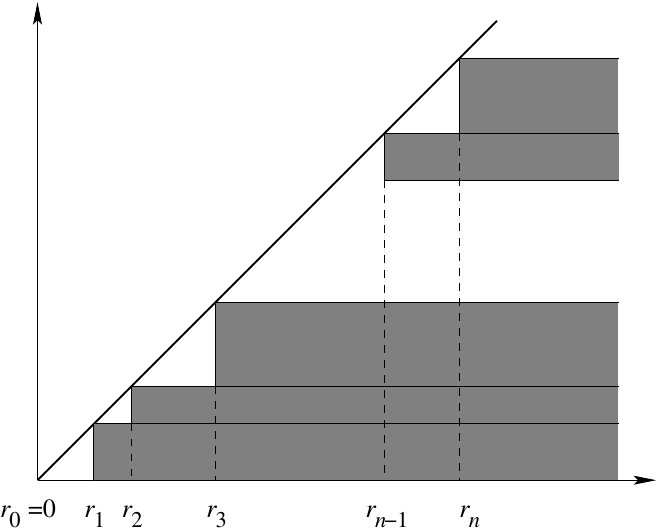}
    \caption{Illustration of lower integral.}
    \label{fig:lower}
 \end{center}
\end{figure}

\begin{remark} \label{rem:LowInt}
  Here are some properties not mentioned in~\cite{Integration}.
  
  First, it makes no difference if the sequence $(r_i)_{i=0}^n$ increases \emph{non-}strictly, as the repeated points make zero contribution.
  
  Second, refining the sequence (adding extra points) does not decrease the value of $\Lintapprox$,
  because if $r_{i-1} < r' < r_i$ then
  \[ \begin{split}
    (r_i - r_{i-1})\mu f^\ast\overrightarrow{(r_i,\infty]}
      &= (r_i - r')\mu f^\ast\overrightarrow{(r_i,\infty]}
        + (r' - r_{i-1}) f^\ast\mu\overrightarrow{(r_i,\infty]}
    \\
      &\leq (r_i - r')\mu f^\ast\overrightarrow{(r_i,\infty]}
      + (r' - r_{i-1})\mu f^\ast\overrightarrow{(r',\infty]}
    \text{.}
  \end{split} \]

  Third, by finding a common denominator for the rationals $r_i$ and refining, we can assume without loss of generality that the sequence is in arithmetic progression, $r_i=ir_n/n$.
\end{remark}

\subsubsection*{Upper integrals} \label{sec:UpperInt}

For an upper integral $\uint_X f d\nu$, both integrand and measure are non-negative upper reals.
This time, the measure is a \emph{co}valuation,
a Scott continuous map $\nu\colon \Omega X \to \overleftarrow{[0,\infty]}$,
which gives the measure on \emph{closed} subspaces:
$\nu U$ is the mass of $X-U$.
(Note that the specialization order on upper reals is the reverse of the numerical order, so $\nu$ is numerically contravariant.)
Again, it satisfies the modular law, but this time with $\nu X=0$.
The covaluations form a space $\Coval X$, and again $\Coval$ is a functor.

$\nu$ is a \emph{probability covaluation} if, in addition, $\nu \emptyset=1$, and \emph{finite} if $\nu \emptyset$ is a Dedekind real.
We also say that $\nu$ is \emph{bounded} if $\nu\emptyset < q$ for some rational $q$ -- so finite implies bounded.

Note that there is an equivalence between finite valuations and finite covaluations.
If $\mu$ is a finite valuation, then its \emph{complement} covaluation $\neg\mu$ is defined by $\neg\mu U = \mu X - \mu U$,
and similarly for a finite covaluation $\nu$.

Again, the upper integral ${\uint}_X f d\nu$ is defined~\cite{Integration} in two steps, the first more complicated this time.
We assume $X$ is compact, and $f\colon X \to \overleftarrow{[0,\infty)}$ --
that is to say, $\infty$ is excluded from the codomain of $f$, so every $f(x)$ is finite.
However, the following Lemma allows us to use compactness of $X$ to find a uniform bound.
It is a one-sided version of results in~\cite[Section 7.1]{CViet}, which show that a positive point of $\PV \Reals$ has both a $\sup$ and an $\inf$.

\begin{lemma} \label{lem:compSup}
  There is a map
  \[
    \sup \colon \PU \overleftarrow{[0,\infty)} \to
      \overleftarrow{[0,\infty)}
  \]
  such that $\sup K$ is the least upper bound of the points in $K$.
\end{lemma}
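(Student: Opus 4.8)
The plan is to build the map directly from the standard presentations of the two spaces involved. Recall that the upper hyperspace comes with a modality $\Pbox$: for an open $U$ of a space $Z$, the open $\Pbox U$ of $\PU Z$ expresses ``$K\subseteq U$'', and $U\mapsto\Pbox U$ preserves finite meets and directed joins. Recall also that to give a map $Y\to\overleftarrow{[0,\infty)}$ is to give a family $(\theta_q)_{q\in\Rat,\ q>0}$ of opens of $Y$ -- to be read as $\theta_q=\{y:f(y)<q\}$ -- which is monotone ($q\le q'$ implies $\theta_q\le\theta_{q'}$), rounded ($\theta_q=\bigvee\{\theta_{q'}\mid 0<q'<q\}$), and bounded ($\bigvee_{q>0}\theta_q=\top$): this is simply the geometric theory of an inhabited, rounded, up-closed subset of $\Rat^{>0}$, which is what a point of $\overleftarrow{[0,\infty)}$ is.

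For $q\in\Rat$ with $q>0$ let $D_q=\{x\oftype\overleftarrow{[0,\infty)}\mid x<q\}$, a basic open. These are linearly ordered by inclusion, with $D_q=\bigdvee\{D_{q'}\mid 0<q'<q\}$ and $\bigdvee_{q>0}D_q=\top$ -- the latter precisely because $\infty$ is excluded from the codomain, so that every point is a \emph{finite} upper real. I would then set $\theta_q=\Pbox D_q$. Monotonicity is immediate; roundedness and boundedness follow from the two displayed directed unions together with the fact that $\Pbox$ preserves directed joins (and $\Pbox\top=\top$). This yields the desired map $\sup\colon\PU\overleftarrow{[0,\infty)}\to\overleftarrow{[0,\infty)}$, characterised by $\sup K<q\iff\Pbox D_q$ holds at $K$. (Note that the empty $K$ then gets $\sup K=0$, correctly its least upper bound in $[0,\infty)$, so no positivity hypothesis on $K$ is needed.)

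It remains to verify that $\sup K$ is the least upper bound of the points of the subspace $K$. That it is an upper bound is easy: if $x$ lies in $K$ and $\sup K<q$, then $\Pbox D_q$ holds at $K$, hence $x\in D_q$, i.e.\ $x<q$; so $x\le\sup K$. For leastness I need the converse implication -- if every point of $K$ is $<q$, then $\Pbox D_q$ holds at $K$ -- and this is the one step that needs real care. It is the one-sided analogue of the argument in~\cite[Section 7.1]{CViet}, and it rests on the good behaviour of the upper hyperspace over a locally compact space: $\overleftarrow{[0,\infty)}$ is a continuous dcpo, so a point of $\PU\overleftarrow{[0,\infty)}$ is faithfully a compact saturated subspace $K$ with $\Pbox U$ holding at $K$ exactly when $K\subseteq U$; since the $D_q$ form a basis closed under finite intersection, ``every point of $K$ is $<q$'' then does force $K\subseteq D_q$. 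The remaining work is routine bookkeeping with the two presentations; the single genuine obstacle is carrying out this last implication \emph{geometrically}, i.e.\ uniformly in generalized points, which is exactly what~\cite{CViet} arranges in the Vietoris setting and what must be adapted here.
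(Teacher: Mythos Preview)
Your argument is correct, and it takes a somewhat different route from the paper's. The paper works via the rounded ideal machinery of~\cite{Infosys}: it identifies $\overleftarrow{[0,\infty)}$ as the rounded ideal completion of $(Q_+,>)$ (an R-structure), identifies $\PU\overleftarrow{[0,\infty)}$ as the rounded ideal completion of $(\fin Q_+,>_U)$, and then defines $\sup$ by its restriction $\max\colon\fin Q_+\to\overleftarrow{[0,\infty)}$, invoking the Lifting Lemma of~\cite[1.29]{NgVic:PtfreeRE} to check that this extends to the whole of $\PU\overleftarrow{[0,\infty)}$. You instead work directly with the frame presentation of the target and the $\Pbox$ modality on the source: setting $\theta_q=\Pbox D_q$ and reading off monotonicity, roundedness and boundedness from the preservation of directed joins and $\top$ by $\Pbox$.

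The two routes are close cousins --- the $(\fin Q_+,>_U)$ description of $\PU\overleftarrow{[0,\infty)}$ is exactly what underlies the $\Pbox$-presentation, and the Lifting Lemma's Monotonicity and Continuity conditions amount to your monotonicity and roundedness checks --- but your version is arguably more transparent and requires less auxiliary machinery. The paper's approach, on the other hand, makes explicit the treatment of the empty case (via the decidable case split on $\fin Q_+$) and plugs straight into a general framework that the reader of~\cite{NgVic:PtfreeRE} already has at hand. For the final ``least upper bound'' verification, both you and the paper defer to the arguments of~\cite[Section~7.1]{CViet}; your remark that this step is the one needing genuine care, and why (faithfulness of $\PU$-points as compact saturated subspaces over a continuous dcpo), is a helpful addition.
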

\begin{proof}
  As subspace of $\overleftarrow{[-\infty,\infty)}$, $\overleftarrow{[0,\infty)}$ is the closed complement of $\overleftarrow{[-\infty,0)}$. It follows that its points are the rounded, inhabited, upsets of the rationals that don't contain 0: in other words, the rounded ideals of $(Q_{+}, >)$ where $Q_{+}$ is the set of positive rationals.
  We can therefore use the methods of~\cite{Infosys}.
  This is aided by the fact that $(Q_{+}, >)$ is an \emph{R-structure,} meaning that, for each $q\in Q_{+}$, the set of rationals $r>q$ is a rounded ideal (with respect to $>$, ie a rounded filter numerically).
  This allows us to use the ``Lifting Lemma'' of~\cite[1.29]{NgVic:PtfreeRE} to define maps out of
  a rounded ideal completion from their restrictions to the base order.
  
  From~\cite{Infosys}, we see that $\PU\overleftarrow{[0,\infty)}$ is the rounded ideal completion of $(\fin Q_{+}, >_U)$, where $\fin$ is the Kuratowski finite powerset, and $S >_U T$ if for every $t\in T$, there is some $s\in S$ with $s>t$.
  We define $\sup$ by its restriction
  $\max\colon \fin Q_{+} \to \overleftarrow{[0,\infty)}$,
  $\max S$ being the actual maximum element if $S$ is inhabited, or 0 if empty.
  (This case splitting is legitimate, as emptiness of finite sets is decidable.)
  
  The two conditions of the Lifting Lemma are now easily checked. Monotonicity says that if $S>_U T$ then $\max S \geq \max T$; and Continuity says that $\max T$ equals $\inf_{S>_U T} \max S$.

  The final part uses arguments similar to those of~\cite[Section 7.1]{CViet}.
\end{proof}

Returning to our definition of the upper integral, $X$ itself can be considered a point in $\PU X$, and then $\PU f(X)$ is a point in
$\PU \overleftarrow{[0,\infty)}$,
and, by Lemma~\ref{lem:compSup}, has a $\sup$ in $\overleftarrow{[0,\infty)}$. Hence there is some rational $r$ such that $X\leq f^{\ast}\overleftarrow{[0,r)}$.

Now ${\uint}_X f d\nu$ is defined as the infimum of the expressions
\begin{equation*} \label{eq:UpInt}
  \Uintapprox(r_i)_i = \sum_{i=1}^n (r_i - r_{i-1})\nu f^\ast\overleftarrow{[0,r_{i-1})}
\end{equation*}
over rational sequences $0 = r_0 < \cdots < r_n$ with $n\geq 1$ and $X \leq f^\ast\overleftarrow{[0,r_n)}$.
See Figure~\ref{fig:upper}.

\begin{figure}
    \begin{center}
        \includegraphics[width=.65\textwidth]
          {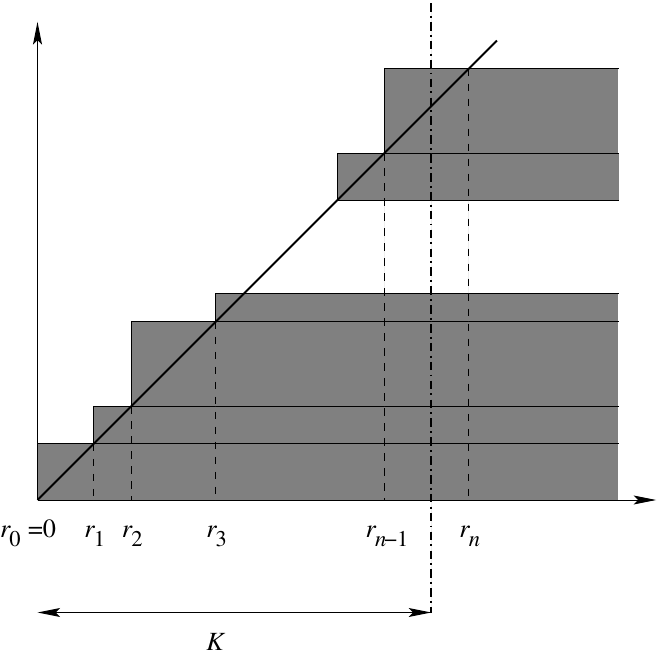}
        \caption{Illustration of upper integral.
            $K = \sup(\PU f(X))$.}
        \label{fig:upper}
    \end{center}
\end{figure}

\begin{remark} \label{rem:UpInt}
  If $\nu$ is bounded, then we have properties analogous to those of Remark~\ref{rem:LowInt} -- except that refining the sequence cannot \emph{in}crease the value of $\Uintapprox$.
  
  Also, if $X \leq f^\ast\overleftarrow{[0,r_{n-1})}$, then we can omit $r_n$ without affecting the value of $\Uintapprox$, because $\nu f^\ast \overleftarrow{[0,r_n)} = \nu X = 0$. 

  (Without boundedness we run into the problem that, for upper reals, $0\cdot\infty = \inf_{0<q\oftype\Rat} q\cdot\infty = \infty$.)
\end{remark}

\section{Two-sided integrals} \label{sec:2Int}
Once we have lower and upper integrals, which are respectively lower and upper reals, it is natural to ask when they can be put together to make a 2-sided real.
\cite{Integration} shows how to do this in the particular case that corresponds to Riemann integration, with $X$ the real interval $[0,1]$ equipped with the Lebesgue valuation.

We now show how to generalize this.
$X$ will be any space that is compact (which enables us to find the bound required for an upper integral); $\mu$ will be any finite valuation on it (with the upper integral provided using the covaluation $\neg\mu$); and the integrand $f$ will be valued in the non-negative Dedekind reals $[0,\infty)$.
Then we have a pair of lower and upper reals,
\begin{equation} \label{eq:2Int}
  {\int}_X f d\mu
    = \left({\lint}_X \Low(f) d\mu, {\uint}_X \Up(f) d(\neg \mu)\right)
  \text{,}
\end{equation}
where $\Low$ and $\Up$ provide the lower and upper parts of a Dedekind real.

In Theorem~\ref{thm:2Int} we shall show that this is a Dedekind real.
Once this is done, we can generalize to signed integrands, $f\colon X \to \Reals$, by decomposing $f$ as $f_+ - f_-$,
where $f_+(x)= \max(0,f(x))$ and $f_-(x)= \max(0,-f(x))$, and then defining
\begin{equation*} \label{eq:2Intsigned}
  {\int}_X f d\mu = {\int}_X f_+ d\mu - {\int}_X f_- d\mu
  \text{.}
\end{equation*}

\begin{theorem} \label{thm:2Int}
  Let $X$ be compact, $\mu$ a finite valuation on $X$, and $f\colon X \to [0,\infty)$.
  Then ${\int}_X f d\mu$ (equation~\eqref{eq:2Int}) is a Dedekind section.
\end{theorem}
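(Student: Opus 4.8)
The plan is to check that the pair $(L,U) := (\lint_X \Low(f)\,d\mu,\ \uint_X \Up(f)\,d(\neg\mu))$ --- a lower real $L$ and an upper real $U$ --- satisfies the two Dedekind axioms \eqref{eq:DedAxiom1} (disjointness) and \eqref{eq:DedAxiom2} (locatedness). Write $M$ for the Dedekind real $\mu X$; since $\mu$ is finite, $\neg\mu$ is a finite, hence bounded, covaluation, so Remark~\ref{rem:UpInt} applies, as does Remark~\ref{rem:LowInt}. Compactness of $X$ supplies, exactly as in the setup of the upper integral, a rational $B$ with $X \le f^\ast\overleftarrow{[0,B)}$, so that $\mu f^\ast\overrightarrow{(B,\infty]} = 0$. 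For a rational $t$ put $U_t := f^\ast\overrightarrow{(t,\infty]} \vee f^\ast\overleftarrow{[0,t)}$; as $f$ is Dedekind-valued these two opens are disjoint, so by the modular law $\mu U_t = \mu f^\ast\overrightarrow{(t,\infty]} + \mu f^\ast\overleftarrow{[0,t)}$ and $\neg\mu U_t = M - \mu U_t$. Two consequences of Dedekind-valuedness of $f$ will do all the real work: $f^\ast\overrightarrow{(t,\infty]}$ and $f^\ast\overleftarrow{[0,t')}$ are disjoint whenever $t' \le t$, and $f^\ast\overrightarrow{(t,\infty]} \vee f^\ast\overleftarrow{[0,t')} = X$ (locatedness of $f(x)$) whenever $t < t'$.

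For disjointness I would show $\Lintapprox(P) \le \Uintapprox(Q)$ for every rational sequence $P$ of the kind used to define $\lint$ and every sequence $Q$ of the kind used for $\uint$. Take a common refinement $R$ whose largest point is at least $B$, so that $R$ too serves for both integrals. By the refinement clauses of Remarks~\ref{rem:LowInt} and~\ref{rem:UpInt}, $\Lintapprox(P) \le \Lintapprox(R)$ and $\Uintapprox(R) \le \Uintapprox(Q)$, so it suffices to see $\Lintapprox(R) \le \Uintapprox(R)$ term by term: for $t_{i-1} < t_i$ the opens $f^\ast\overrightarrow{(t_i,\infty]}$ and $f^\ast\overleftarrow{[0,t_{i-1})}$ are disjoint, so $\mu f^\ast\overrightarrow{(t_i,\infty]} \le M - \mu f^\ast\overleftarrow{[0,t_{i-1})} = \neg\mu f^\ast\overleftarrow{[0,t_{i-1})}$, which (the coefficients $t_i - t_{i-1}$ being equal and non-negative) is the $i$th term comparison. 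Then if some rational $q$ satisfied $U < q < L$, choosing $P$ with $q < \Lintapprox(P)$ and $Q$ with $\Uintapprox(Q) < q$ would give $q < q$.

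For locatedness, fix a rational $\epsilon > 0$ and take $R$ in arithmetic progression, $t_i = iB/n$ ($0 \le i \le n$). Reindexing the two sums and using $\mu f^\ast\overrightarrow{(t_n,\infty]} = 0$ and $\neg\mu f^\ast\overleftarrow{[0,t_0)} = \neg\mu\emptyset = M$, one obtains
\[
  \Uintapprox(R) - \Lintapprox(R) = \tfrac{B}{n}M + \tfrac{B}{n}\sum_{i=1}^{n-1}\neg\mu U_{t_i}
  \text{.}
\]
The crucial point is that $U_{t_i}\vee U_{t_j} = X$ for $i \ne j$ (from $f^\ast\overrightarrow{(t_i,\infty]}\vee f^\ast\overleftarrow{[0,t_j)} = X$ when $t_i < t_j$), and, more generally, $\bigwedge_{i=1}^{k}U_{t_i} \supseteq f^\ast\overrightarrow{(t_k,\infty]}$, so that $\bigl(\bigwedge_{i=1}^k U_{t_i}\bigr)\vee U_{t_{k+1}} = X$. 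Feeding this last identity into the modular law for the covaluation $\neg\mu$ (for which $\neg\mu X = 0$) gives, by induction on $k$, $\sum_{i=1}^k\neg\mu U_{t_i} = \neg\mu\bigl(\bigwedge_{i=1}^k U_{t_i}\bigr)$. Hence $\sum_{i=1}^{n-1}\neg\mu U_{t_i} = \neg\mu\bigl(\bigwedge_{i=1}^{n-1}U_{t_i}\bigr) = M - \mu\bigl(\bigwedge_{i=1}^{n-1}U_{t_i}\bigr) \le M$, so $\Uintapprox(R) - \Lintapprox(R) \le 2BM/n < \epsilon$ once $n > 2BM/\epsilon$. Finally, given rationals $q < r$, apply this with $\epsilon = r-q$ to get $R$ with $\Uintapprox(R) < \Lintapprox(R) + (r-q)$; picking a rational $s$ with $\Uintapprox(R) < s < \Lintapprox(R) + (r-q)$ and splitting on the decidable alternative $s - (r-q) \ge q$ or $s - (r-q) < q$ yields $q < \Lintapprox(R) \le L$ in the first case and $U \le \Uintapprox(R) < s < r$ in the second, so $q < L \vee U < r$ in either case.

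The main obstacle is the bound $\sum_{i=1}^{n-1}\neg\mu U_{t_i} \le M$: the sum has $n-1$ terms, each potentially as large as $M$, and it is only the pairwise-covering identity $U_{t_i}\vee U_{t_j} = X$ --- a direct expression of the locatedness of the values of $f$ --- that collapses it, through the modular law, to the single term $\neg\mu\bigl(\bigwedge_i U_{t_i}\bigr) \le M$. A subsidiary point that needs attention throughout is to keep every arithmetic step inside the one-sided reals (we only ever subtract a lower real from a Dedekind real, obtaining an upper real, and never add a lower real to an upper one), so that all the displayed equalities are genuine identities of one-sided reals rather than of not-yet-known-to-exist Dedekind reals.
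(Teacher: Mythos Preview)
Your argument is correct. For disjointness you do essentially what the paper does: pass to a common refinement and compare termwise, using that $f^\ast\overrightarrow{(t_i,\infty]}$ and $f^\ast\overleftarrow{[0,t_{i-1})}$ are disjoint. (One small point you leave implicit is that termwise disjointness of the lower/upper pairs implies disjointness of their weighted sums; this is routine but does require a short pigeonhole step, which is exactly what the paper carries out when it decomposes $q$ as two sums and finds an index $i$ with $q'_i\le q''_i$.)

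For locatedness your route genuinely differs from the paper's. The paper writes $\Uintapprox(r_i)_i = r_n\mu X - a$ and then bounds $\Lintapprox(r_i)_i + a$ from below by $(r_n-2r_1)\mu X$, via a re-indexing that pairs $\mu f^\ast(r_{i-1},\infty)$ with $\mu f^\ast[0,r_i)$ and uses locatedness of $f$ once per pair. You instead introduce the ``apartness'' opens $U_t = f^\ast\overrightarrow{(t,\infty]}\vee f^\ast\overleftarrow{[0,t)}$, express the gap exactly as $(B/n)\bigl(M+\sum_i\neg\mu U_{t_i}\bigr)$, and collapse the sum inductively via the covaluation modular law to a single term $\neg\mu\bigl(\bigwedge_i U_{t_i}\bigr)\le M$. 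Both arguments use the same underlying fact---that $f^\ast\overrightarrow{(t_k,\infty]}\vee f^\ast\overleftarrow{[0,t_{k+1})}=X$---but organise it differently. Your packaging makes the role of locatedness very transparent (the $\neg\mu U_t$ are the masses of the ``$f$ not apart from $t$'' regions) and yields a cleaner endgame; the paper's version avoids the auxiliary opens and the induction, at the cost of a fiddlier final step with the auxiliary constants $q'<q''<r_n\mu X<r'$. The bounds obtained, $2BM/n$ versus $2r_1\mu X$, are of course the same.
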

\begin{proof}
  First, it is clear that $\Val(\Low(f))(\mu)$ and $\Coval(\Up(f))(\neg\mu)$ are both finite,
  with the same total mass $\mu X$.
  
  We show that ${\int}_X f d\mu$ as defined in~\eqref{eq:2Int} satisfies the axioms~\eqref{eq:DedAxiom1} and~\eqref{eq:DedAxiom2}.

  For disjointness, suppose we have
  sequences $(r_i)_i$ and $(r'_{i'})_{i'}$ such that
  $\Uintapprox(r_i)_i < q$ (with $\neg\mu$ for $\nu$))
  and $q < \Lintapprox(r'_{i'})_{i'}$.
  By refining, we can assume they use the same sequence $(r_i)_i$, so
  \[
    \sum_{i=1}^n (r_i - r_{i-1})(\mu X - \mu f^\ast[0,r_{i-1}))
    < q <
      \sum_{i=1}^n (r_i - r_{i-1})\mu f^\ast(r_i,\infty)
    \text{.}
  \]
  Then we can find $q = \sum_{i=1}^n q'_i = \sum_{i=1}^n q''_i$
  with
  \[
    (r_i - r_{i-1})(\mu X - \mu f^\ast[0,r_{i-1})) < q'_i
    \text{ and }
    q''_i < (r_i - r_{i-1})\mu f^\ast(r_i,\infty)
    \text{.}
  \]
  We cannot have $q'_i > q''_i$ for all $i$, so there must be $i$ with $q'_i \leq q''_i$. Then
  \[
    \mu X - \mu f^\ast[0,r_{i-1})
      < q'_i/(r_i - r_{i-1})
      < \mu f^\ast(r_i,\infty)
    \text{,}
  \]
  which gives a contradiction from
  \[
    \mu X < \mu f^\ast[0,r_{i-1}) + \mu f^\ast(r_i,\infty)
      = \mu f^\ast([0,r_{i-1}) \vee (r_i,\infty))
      \leq \mu X
    \text{.}
  \]
  
  For locatedness, given $q<r$, we need to find a sequence $(r_i)_i$ such that either $q<\Lintapprox(r_i)_i$ or $\Uintapprox(r_i)_i < r$. 
  Our strategy will be to find a sequence for which
  $\Lintapprox(r_i)_i$ and $\Uintapprox(r_i)_i$ are sufficiently close.
  Referring to Figures~\ref{fig:lower} and~\ref{fig:upper}, their difference is the total size of the squares along the diagonal, and we wish to make these small.
  
  According to Remark~\ref{rem:UpInt}, we can assume the sequence is in arithmetic progression, so
  \[
    \Uintapprox(r_i)_i = \sum_{i=1}^n (r_i - r_{i-1})(\mu X - \mu f^\ast[0,r_{i-1}))
      = r_n \mu X - r_1 \sum_{i=1}^n \mu f^\ast[0,r_{i-1})
    \text{.}
  \]

  If we write $a = r_1 \sum_{i=1}^n \mu f^\ast[0,r_{i-1})$,
  then we are trying to find a sequence sufficiently refined so that $\Lintapprox(r_i)_i + a$ is large enough to be close to $r_n \mu X$.
  
  First, let us choose $r_n$ so that $X\leq f^\ast[0,r_n)$, in other words $K < r_n$ in Figure~\ref{fig:upper}.
  (It remains to choose $n$ so that $r_1=r_n/n$ is sufficiently small.)
  
  We calculate
  \[ \begin{split}
	\Lintapprox(r_i)_i + a
	&= r_1 \left( \sum_{i=1}^{n-1} \mu f^\ast(r_i, \infty)
	+ \sum_{i=1}^n \mu f^\ast [0,r_{i-1}) \right)\\
	& \geq r_1 \sum_{i=2}^{n-1}
	\left( \mu f^\ast(r_{i-1}, \infty) + \mu f^\ast[0,r_i) \right) \\
	& = r_1 \sum_{i=2}^{n-1}
	\left( \mu X + \mu f^\ast(r_{i-1},r_i)\right) \\
	& \geq (r_n - 2r_1) \mu X
  \end{split} \]
  
  Now let us find $q' < q'' < r_n \mu X < r'$ with $r'-q' \leq r-q$, and choose $n$ so that $2r_1 \mu X < q''-q'$.
  Then $q' < \Lintapprox(r_i)_i + a$,
  so we can find $q' = s+t$ such that $s < \Lintapprox(r_i)_i$ and $t < a$.
  
  If $q\leq s$ then $q<\Lintapprox(r_i)_i$.

  Otherwise, $s < q$, we have
  \[
    r_n \mu X - r < r'-r \leq q'-q = s+t-q < t < a
    \text{,}
  \]
  so $\Uintapprox(r_i)_i = r_n \mu X -a < r$.
\end{proof}

\subsection{Valuations on real intervals} \label{sec:ValRealInt}

We shall be interested in the case where $X$ is a compact real interval $[x,y]$ (with $x \leq y$ both Dedekind),
and $\mu$ is either Lebesgue valuation $\Leb{x}{y}$ or the uniform probability valuation $\Unif{x}{y}$.
These are all finite, and so we have corresponding covaluations.

Note that all our constructions can be regarded as continuous maps.
For example, the one that takes a pair $(x,y)$ of reals, with $x \leq y$, and returns a compact subspace $[x,y]$ of $\Reals$, is the \emph{Heine-Borel map} $\HBC$ of~\cite{CViet}, from $\leq$ as subspace of $\Reals^2$ to the Vietoris hyperspace (powerlocale) $\PV\Reals$.
The hyperspace comes equipped with a tautologous bundle, which allows us to consider \eg\ $\Leb{x}{y}$ as a point of $\Val[x,y]$, so $(x,y)\mapsto \left([x,y], \Leb{x}{y}\right)$
is a continuous map.
This allows us to say that an integral is continuous in its bounds.

We must avoid the temptation to think every $[x,y]$ is homeomorphic to $[0,1]$, and that it suffices to take $x=0$ and $y=1$.
This fails in the important case $x=y$, and defining the homeomorphism goes beyond the scope of Theorem~\ref{thm:cases}.
We shall need a uniform way to cover both cases, $x\ne y$ and $x=y$.

\begin{lemma} \label{lem:RVal}
  To define a valuation $\mu$ on $\Reals$, it suffices to define $\mu(a,a')\oftype \overrightarrow{[0,\infty]}$ for all rationals $a<a'$, so that the following conditions hold.
  \begin{enumerate}
  \item
    \[
      \mu(a,a')=
        \sup \{ \mu(b, b') \mid a < b < b' < a' \}
      \text{,}
    \]
  \item
    If $a < b < a' < b'$ then
    \[
      \mu(a, a') + \mu(b, b') = \mu(a, b') + \mu(b, a')
      \text{.}
    \]
    (Note that, in the presence of (1), this then also holds for $a\leq b < a' \leq b'$.)
  \end{enumerate}

  In addition, for a valuation on $[x,y]$, we need that $\mu(a,a') = 0$ iff $a'<x$ or $y<a$.
\end{lemma}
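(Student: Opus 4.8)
The plan is to recover the full open-set data of a valuation $\mu$ on $\Reals$ (or on $[x,y]$) from the restricted data $\mu(a,a')$ on rational open intervals, using the fact that every open of $\Reals$ is canonically a directed join of finite unions of rational open intervals, and then to show the modular law and Scott continuity extend. First I would fix notation: write $B$ for the base of rational open intervals $(a,a')$ with $a<a'$ rational, together with $\emptyset$; this is a basis of $\Reals$ closed under finite intersections (the intersection of two rational intervals is again one, or empty). Condition (2), in the presence of (1), gives finite additivity on disjoint members of $B$ and, more generally, the modular law on $B$ — the parenthetical remark about $a\le b<a'\le b'$ is exactly the extension needed to cover the cases where the two intervals abut or are nested. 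So on the meet-semilattice $B$ we have a modular, monotone, $\overrightarrow{[0,\infty]}$-valued function that is an $\omega$-continuous ``pre-valuation'' in the sense of the localic/Choquet extension theorems.

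The key step is the extension from a basis to the whole frame $\Omega\Reals$. Here I would invoke the standard point-free extension result for valuations: a monotone modular function on a basis closed under finite meets, satisfying the roundedness/continuity condition (1), extends uniquely to a Scott-continuous valuation on the frame, by setting $\mu(U) = \sup\{\,\mu(\bigvee_{i=1}^k I_i) \mid I_i\in B,\ \bigcup_i I_i \subseteq U\,\}$ and using inclusion–exclusion on finite unions of basic opens to reduce $\mu$ of a finite union to an alternating sum of $\mu$ of finite meets (all of which lie in $B$). Monotonicity and Scott continuity of the extension are then immediate from the definition as a directed supremum; the modular law on all opens follows from the modular law on $B$ by the usual inclusion–exclusion bookkeeping, carried out with lower reals (which is safe, since finite sums and directed sups of lower reals behave well and we never subtract). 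Geometrically, this whole construction is a directed colimit of finitary data, hence is a legitimate geometric construction, so it defines a point of $\Val\Reals$.

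For the relativisation to $[x,y]$, note that $\Omega[x,y]$ is the quotient frame of $\Omega\Reals$ obtained by forcing $\overrightarrow{(-\infty,x)}$ and $\overleftarrow{(y,\infty)}$ (more precisely their intersections with rational intervals) to be $\emptyset$; so a valuation on $[x,y]$ is the same as a valuation on $\Reals$ that kills every open disjoint from $[x,y]$. The stated side condition ``$\mu(a,a')=0$ iff $a'<x$ or $y<a$'' says exactly that a basic rational interval gets measure zero precisely when it is (rationally) disjoint from $[x,y]$; combined with monotonicity and Scott continuity of the extension, this forces $\mu(U)=0$ for every open $U$ of $\Reals$ with $U\cap[x,y]=\emptyset$ (such $U$ being a union of rational intervals each disjoint from $[x,y]$), and conversely is clearly necessary. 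Hence $\mu$ descends to a valuation on $[x,y]$, and this correspondence is bijective.

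The main obstacle I anticipate is establishing the modular law and, especially, Scott continuity of the extended $\mu$ on arbitrary opens purely from conditions (1) and (2) on the basis, keeping everything constructive and one-sided: one must check that the inclusion–exclusion formula for $\mu$ of a finite union of basic intervals is well defined (independent of the chosen presentation) and monotone, which is where condition (2) in its abutting/nested form does the real work, and then that directed joins of opens are respected, which reduces — via the fact that $\Reals$ is a continuous dcpo with basis $B$ — to condition (1). None of this needs genuinely new ideas beyond the localic valuation-extension machinery, but it is the part that requires care rather than citation, so I would present it as the heart of the proof and relegate the $[x,y]$ relativisation to a short concluding paragraph.
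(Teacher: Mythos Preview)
Your overall strategy---extend from a basis of rational intervals via a valuation-extension result, then descend to $[x,y]$ by killing intervals disjoint from it---matches the paper's, and your handling of the $[x,y]$ case is essentially what the paper does (it modifies the generating lattice by adjoining $(a,a')\le\bot$ whenever $a'<x$ or $y<a$). However, there is a genuine tension at the heart of your extension step. You propose to compute $\mu$ on a finite union of basic intervals by inclusion--exclusion, reducing to an ``alternating sum'' of values on finite meets; yet a few lines later you reassure the reader that ``we never subtract.'' These are incompatible: inclusion--exclusion \emph{is} an alternating sum, and subtraction is not available for arbitrary lower reals in $\overrightarrow{[0,\infty]}$. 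Since the lemma only assumes $\mu(a,a')\in\overrightarrow{[0,\infty]}$, not Dedekind, the formula you describe is not well defined as stated.

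The paper sidesteps this by a different normalisation. Rather than work with the meet-semilattice of single intervals and inclusion--exclusion, it takes $L$ to be the distributive lattice whose elements are finite \emph{disjoint} families of rational intervals (plus an adjoined top). Any finite union of intervals is first reduced to this canonical form by merging overlapping intervals into their common envelope, and then $\mu$ is defined as a plain sum, $\mu\bigl(\bigvee_i(a_i,a'_i)\bigr)=\sum_i\mu(a_i,a'_i)$, with no subtraction anywhere. The price is that modularity on $L$ is no longer formal: the paper verifies $\mu(u\vee v)+\mu(u\wedge v)=\mu u+\mu v$ by an explicit induction on the total number of disjuncts, with a three-way case split on how the leftmost intervals of $u$ and $v$ relate (disjoint, one contained in the other, or properly overlapping), invoking condition~(2) only in its additive rearranged form. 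This case analysis is exactly the ``real work'' you anticipated, but it is organised quite differently from inclusion--exclusion bookkeeping. If you replace your alternating-sum step by this disjoint-normal-form definition and carry out the induction, the remainder of your proposal goes through.
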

\begin{proof}
  We use~\cite[Proposition 4.1]{Integration}. It shows how the Scott continuity of a valuation allows it to be defined on a distributive lattice $L$ of generators, of which every open is a directed join.
  It suffices to define $\mu$ as a monotone map from $L$ to $\overrightarrow{[0,\infty]}$ that satisfies $\mu0 = 0$ and the modular law, and preserves certain specified directed joins.
    
  For $\Reals$ it is clear that every open is a directed join of finite joins of rational open intervals, $\bigvee_1^n (a_i,a'_i)$ with $a_i<a'_i$.
  If two of the intervals overlap then we can replace them by their envelope, and so we can reduce to the case where the intervals are disjoint.
  We can also rearrange to get $a'_i \leq a_{i+1}$.
  Now, by the modular law, we must have
  \begin{equation} \label{eq:muJoin}
    \mu \left( \bigvee_1^n (a_i,a'_i) \right)
      = \sum_i \mu (a_i, a'_i)
    \text{,}
  \end{equation}
  so it suffices to define $\mu$ on non-empty rational open intervals.

  For us, then, $L$ is the distributive lattice of finite, disjoint sets of rational open intervals together with an adjoined top.
  The specified directed joins are 
  $\bigvee_i (a_i,a'_i)$ as join of elements $\bigvee_i (b_i,b'_i)$ such that $a_i<b_i<b'_i<a'_i$, and top as join of all elements $\bigvee_i (a_i,a'_i)$.
  Preservation of those, together with monotonicity, are easily derived from condition (1).
  
  For modularity, consider $u=\bigvee_i(a_i,a'_i)$ and $v=\bigvee_j(b_j,b'_j)$.
  We have to consider the process by which $u\vee v$ is reduced to the canonical form.
  We reduce from $(u,v)$ to some $(u', v')$ with fewer disjuncts, and use induction.
  Assuming (without loss of generality) that $a_1\leq b_1$, there are three cases.
  \begin{itemize}
  \item
    If $a'_1 \leq b_1$, then we can detach $(a_1,a'_1)$ from $u$, giving $u'$, and leave $v'=v$.
    Then $\mu (u\vee v)= \mu(a_1,a'_1)+\mu(u'\vee v')$
    and $\mu (u\wedge v)= \mu(u'\wedge v')$.
  \item
    If $b'_1 \leq a'_1$, then we can detach $(b_1,b'_1)$ from $v$, giving $v'$, and leave $u'=u$.
    Then $\mu (u\vee v)= \mu(u'\vee v')$
    and $\mu (u\wedge v)= \mu(b_1, b'_1) +\mu(u'\wedge v')$.
  \item
    If $b_1 < a'_1 < b'_1$, then we remove $(a_1, a'_1)$ from $u$, giving $u'$, and in $v$ we replace $(b_1,b'_1)$ by $(a_1, b'_1)$, giving $v'$.
    Then $\mu (u\vee v)= \mu(u'\vee v')$
    and $\mu (u\wedge v)= \mu(b_1, a'_1) +\mu(u'\wedge v')$. Hence, writing $v''$ for $v$ with $(b_1,b'_1)$ removed,
    \[
      \begin{split}
        \mu(u\vee v) &+ \mu(u\wedge v)
          = \mu(u'\vee v') + \mu(b_1, a'_1) + \mu(u'\wedge v') \\
        &= \mu(b_1, a'_1) + \mu u' + \mu v' \\
        &= \mu(b_1, a'_1) + \mu u' + \mu(a_1, b'_1) + \mu v'' \\
        &= \mu(a_1, a'_1) + \mu(b_1, b'_1) + \mu u' + \mu v''
          = \mu u + \mu v \text{.}
      \end{split}
    \]
  \end{itemize}

  For the final part, $L$ is modified to take on the axioms that $(a,a')\leq \bot$ if $a'<x$ or $y<a$, and the rest is straightforward.
\end{proof}

\subsection{The Lebesgue valuation \texorpdfstring{$\Leb{x}{y}$}{lambda\_xy}
on \texorpdfstring{$[x,y]$}{[x,y]}}
\label{sec:Lebesgue}

\begin{definition} \label{def:Leb}
  Let $x\leq y$ be Dedekind reals. Then the Lebesgue valuation $\Leb{x}{y}$ is defined by
  \[
    \Leb{x}{y}(a,b) = \Low \left(\min(b,y)\tminus\max(a,x)\right)
    \text{.}
  \]
\end{definition}

Here, $\tminus$ is the \emph{truncated minus,} $y\tminus x = \max(0, y-x)$.

It is straightforward to check the conditions of Lemma~\ref{lem:RVal}.
Of course, we can also define the Lebesgue valuation $\Leb{}{}$ on $\Reals$ by $\Leb{}{}(a,b) = \Low(b-a)$.

\begin{proposition} \label{prop:Leb}
	If $x<y$, this defines the same Lebesgue valuation as in~\cite{Integration}.
\end{proposition}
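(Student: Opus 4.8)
The plan is to exploit the uniqueness built into Lemma~\ref{lem:RVal}: by \cite[Proposition 4.1]{Integration} a valuation on a real interval is determined by its restriction to the generating lattice of finite disjoint unions of rational open intervals, so any two valuations on $[x,y]$ that agree on every $(a,a')$ with $a<a'$ rational must be equal. Hence it suffices to show that the Lebesgue valuation of \cite{Integration}, when regarded as a valuation on $[x,y]$, assigns to $(a,a')$ the value $\Low\bigl(\min(a',y)\tminus\max(a,x)\bigr)$ of Definition~\ref{def:Leb}, i.e.\ the lower real length of $(a,a')\cap[x,y]$.

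Since $x<y$, the difference $y-x$ is a positive Dedekind real, hence invertible, so the affine map $h\colon[0,1]\to[x,y]$, $h(t)=(1-t)x+ty$, is a homeomorphism, with inverse $z\mapsto (z-x)/(y-x)$. This is the only place the hypothesis $x<y$ is used, and it is precisely the step that fails at $x=y$ (and which, as already noted, lies outside the scope of Theorem~\ref{thm:cases}), so the hypothesis cannot be dropped. In \cite{Integration} the Lebesgue valuation on $[x,y]$ is obtained by transporting the Lebesgue valuation on $[0,1]$ along $h$ and rescaling by the total mass $y-x$; thus on a rational open interval $(a,a')$ it evaluates to $(y-x)$ times the length of $h^{-1}(a,a')\cap[0,1]$. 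Now $h^{-1}(a,a')=\left(\tfrac{a-x}{y-x},\tfrac{a'-x}{y-x}\right)$, whose endpoints are in general not rational, but Scott continuity of the $[0,1]$-valuation lets us compute the required length as a supremum over rational subintervals, giving $\tfrac{1}{y-x}\bigl(\min(a',y)\tminus\max(a,x)\bigr)$; after the scaling by $y-x$ this is exactly $\Leb{x}{y}(a,a')$.

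By the uniqueness recorded in the first paragraph, the two valuations then coincide. The delicate points are the point-free justification that $h$ is a genuine homeomorphism (a consequence of $y-x$ having a reciprocal, which is where positivity is essential) and the bookkeeping for the normalization factor $y-x$ relating the total mass $1$ of Lebesgue measure on $[0,1]$ to its total mass $y-x$ on $[x,y]$; I expect the latter --- pinning down precisely which normalization \cite{Integration} adopts for intervals other than $[0,1]$ --- to be the main thing to get right, the remaining computation being the same elementary interval arithmetic already used to check the conditions of Lemma~\ref{lem:RVal} for Definition~\ref{def:Leb}.
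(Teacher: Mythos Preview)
Your proposal is correct and follows essentially the same line as the paper: reduce to rational basic opens $(a,b)$ and check that the scaled $[0,1]$-valuation from~\cite{Integration} assigns to $(a,b)$ the length of $(a,b)\cap[x,y]$, which is exactly $\min(b,y)\tminus\max(a,x)$.

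The paper is, however, more direct than your route through the explicit homeomorphism $h$. Rather than pulling $(a,a')$ back along $h^{-1}$ and then coping with the resulting non-rational endpoints via Scott continuity, the paper simply records what the ``scaling'' of~\cite{Integration} \emph{amounts to} as a formula on $[x,y]$ itself:
\[
  \Leb{x}{y}(U)=\sup\Bigl\{\Low\Bigl(\textstyle\sum_i (r_i-q_i)\Bigr)\Bigr\}
\]
over disjoint rational $(q_i,r_i)\subseteq U$, and then evaluates this directly on $U=(a,b)$. This sidesteps both your detour through $h^{-1}$ and your worry about which normalization~\cite{Integration} adopts: the displayed sup formula visibly has total mass $y-x$, so there is nothing further to pin down. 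Your use of the affine homeomorphism is not wrong, but it introduces complications (irrational pullback endpoints, the normalization bookkeeping) that the paper's formulation avoids entirely.
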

\begin{proof}
  The formula given there is for $[0,1]$, to be generalized to $[x,y]$ ``by scaling''. This amounts to
  \[
    \Leb{x}{y}(U) = \sup \left\{ \Low \left(\sum_i(r_i- q_i)\right)\right\}
  \]
  where the $(q_i,r_i)$s are disjoint rational open intervals included in $U$.
  For $U$ of the form $(a,b)$, that reduces to $b-a$ if the interval $(a,b)$ is contained in $[x,y]$.
  More generally we must find the intersection of $(a,b)$ with $[x,y]$ and take the length of that (or, rather, its lower part), and that is what is given by Definition~\ref{def:Leb}.
\end{proof}

Note that $\Leb{x}{y}([x,y])=\Low(y-x)$, including the case $x=y$. Also, if $(a,b)$ is disjoint from $[x,y]$, \ie\ if $b\leq x$ or $y\leq a$, then $\Leb{x}{y}(a,b)=0$.

If $f\colon [x,y] \to \Reals$, then from Theorem~\ref{thm:2Int} we have a 2-sided integral ${\int}_{[x,y]} f d\Leb{x}{y}$.
For $[0,1]$, \cite{Integration} already showed the existence of this and that it equals the Riemann integral $\int_0^1 f(t)dt$; the proof extends to the case of $[x,y]$ for $x<y$, and clearly it also holds for $x=y$.

We shall write $\int_x^y f(t)dt$ for ${\int}_{[x,y]} f d\Leb{x}{y}$ generally for $x\leq y$.
Our aim now is to extend that to the case $y\leq x$, and prove the interval additivity equation:   \begin{equation} \label{eq:intsum}
    \int_x^z f(t) dt = \int_x^y f(t) dt + \int_y^z f(t) dt
    \text{.}
\end{equation}

For $x\leq y$, let us write $i_{xy}\colon [x,y] \to \Reals$ for the inclusion.

\begin{proposition} \label{prop:LebIntervals}
  If $x\leq y\leq z$ are Dedekind reals,
  then
  \[
    \Val i_{xz}(\Leb{x}{z}) = \Val i_{xy}(\Leb{x}{y}) + \Val i_{yz}(\Leb{y}{z})
    \text{,}
  \]
  and
  \[
    \Coval i_{xz}(\neg\Leb{x}{z})
      = \Coval i_{xy}(\neg\Leb{x}{y}) + \Coval i_{yz}(\neg\Leb{y}{z})
    \text{.}
  \]
\end{proposition}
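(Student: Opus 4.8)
The plan is to reduce the valuation equation to an elementary additivity-of-length identity, checked on rational open intervals, and then to deduce the covaluation equation from it purely formally by complementation.

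First I would observe that both sides of the valuation equation are \emph{finite} valuations on $\Reals$: pushforwards of valuations along continuous maps are valuations (functoriality of $\Val$), a sum of valuations is a valuation, and the total masses agree, since $\Leb{x}{z}([x,z]) = \Low(z-x)$ on the left while $\Low(y-x) + \Low(z-y) = \Low(z-x)$ on the right. By the machinery behind Lemma~\ref{lem:RVal} --- namely \cite[Proposition 4.1]{Integration} together with equation~\eqref{eq:muJoin} --- a valuation on $\Reals$ is determined by its values on single rational open intervals (the value on the top $\Reals$ being then determined, and already seen to be $\Low(z-x)$ on both sides). So it suffices to apply both sides to an arbitrary rational open interval $(a,a')$ with $a<a'$.

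For this, for any inclusion $i_{pq}\colon[p,q]\to\Reals$ one has $\bigl(\Val i_{pq}(\Leb{p}{q})\bigr)(a,a') = \Leb{p}{q}\bigl(i_{pq}^\ast(a,a')\bigr) = \Low\bigl(\min(a',q)\tminus\max(a,p)\bigr)$ straight from Definition~\ref{def:Leb}, this being $\Low$ of the length of $[a,a']\cap[p,q]$ (degenerate cases included, where both sides are $0$). Since addition of lower reals satisfies $\Low(u)+\Low(v)=\Low(u+v)$, the equation on $(a,a')$ becomes the real-number identity
\[
  \min(a',z)\tminus\max(a,x) = \bigl(\min(a',y)\tminus\max(a,x)\bigr) + \bigl(\min(a',z)\tminus\max(a,y)\bigr)\text{,}
\]
valid whenever $x\le y\le z$, which is just the additivity of length under the partition $[x,z]=[x,y]\cup[y,z]$. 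This is the one place where a genuinely point-free step is needed: the identity must be established within geometric logic rather than merely declared obvious, and the case-splitting results of Section~\ref{sec:Tricks} (Theorem~\ref{thm:cases}, Corollary~\ref{cor:R2Pushout}) are the right tool. I would cut the work down by rewriting $\min(a',q)\tminus\max(a,p)$ as the difference of the ``clamps'' $\max(p,\min(q,a'))$ and $\max(p,\min(q,a))$, which reduces everything to the single identity $\max(x,\min(z,t)) + y = \max(x,\min(y,t)) + \max(y,\min(z,t))$ for $x\le y\le z$; that one yields to a case-split on the position of $t$ relative to $y$. The degenerate cases $x=y$ and $y=z$ need no separate argument, as then a summand on the right is $\Low$ of a non-positive quantity, hence $0$.

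Finally, for the covaluation equation I would use that pushforward commutes with complementation for finite valuations: for $i_{pq}\colon[p,q]\to\Reals$ and an open $V$ of $\Reals$,
\[
  \Coval i_{pq}(\neg\Leb{p}{q})(V) = \neg\Leb{p}{q}\bigl(i_{pq}^\ast V\bigr) = (q-p) - \Leb{p}{q}\bigl(i_{pq}^\ast V\bigr) = \neg\bigl(\Val i_{pq}(\Leb{p}{q})\bigr)(V)\text{,}
\]
together with the fact that $\neg(\mu+\mu')=\neg\mu+\neg\mu'$ for finite valuations whose total masses add to that of $\mu+\mu'$ --- which holds here since $(y-x)+(z-y)=z-x$. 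Thus the covaluation equation is obtained by applying $\neg$ to the valuation equation already proved (using $\neg\neg=\mathrm{id}$). The main obstacle, such as it is, is modest and twofold: getting the elementary length identity through geometrically rather than by appeal to ``obviousness'', and keeping the total-mass bookkeeping straight so that the complementations above are legitimate.
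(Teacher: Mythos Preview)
Your proposal is correct and follows essentially the same approach as the paper: reduce the valuation equation to the elementary length identity $\min(a',z)\tminus\max(a,x) = (\min(a',y)\tminus\max(a,x)) + (\min(a',z)\tminus\max(a,y))$ on rational open intervals, verify it by a case-split on the position of $y$ relative to the interval (the paper uses Theorem~\ref{thm:cases} with the open $a<y<a'$ against its closed complement, whereas you route through a clamp reformulation and Corollary~\ref{cor:R2Pushout}, but these are interchangeable), and then obtain the covaluation equation by complementation, which the paper records as ``immediate from the first'' and you spell out.
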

\begin{proof}
  For the first part we require, for all rational $a<b$, that
  \[
    \Leb{x}{z}(a,b) = \Leb{x}{y}(a,b) + \Leb{y}{z}(a,b)
    \text{,}
  \]
  in other words, that
  \[
    (\min(b,z)\tminus \max(a,x))
      = (\min(b,y)\tminus \max(a,x)) + (\min(b,z)\tminus \max(a,y))
    \text{.}
  \]
  Fixing $a$ and $b$, the collection of triples $(x,y,z)$ satisfying this forms a subspace of $\Reals^3$,
  so it suffices to check on the open subspace for which $a<y<b$, and its closed complement for which $b\leq y$ or $y\leq a$.
  
  If $a<y<b$, then
  \[
    \text{RHS } = (y-\max(a,x)) + (\min(b,z)-y) = \text{ LHS.}
  \]
  
  If $b\leq y$, then
  \[
    \text{RHS } = b \tminus \max(a,x) = \text{ LHS,}
  \]
  and similarly if $y\leq a$.
  
  The second part is immediate from the first. 
\end{proof}

\begin{corollary} \label{cor:RiemannAddIntervals}
  If $x\leq y \leq z$ then the interval additivity equation~\ref{eq:intsum} holds.
\end{corollary}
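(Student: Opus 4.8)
The plan is to unwind both sides of~\eqref{eq:intsum} through the defining formula~\eqref{eq:2Int} for the two-sided integral, so that the claim reduces to additivity of the one-sided (lower and upper) integrals in their respective measures, and then to feed in Proposition~\ref{prop:LebIntervals}. First I would note that addition of Dedekind reals is computed coordinatewise on lower and upper parts, $\Low(a+b)=\Low a+\Low b$ and $\Up(a+b)=\Up a+\Up b$. Writing each summand $\int_x^y f(t)\,dt = {\int}_{[x,y]} f\,d\Leb{x}{y}$ as the pair $\bigl({\lint}_{[x,y]}\Low f\,d\Leb{x}{y},\ {\uint}_{[x,y]}\Up f\,d(\neg\Leb{x}{y})\bigr)$ of~\eqref{eq:2Int}, equation~\eqref{eq:intsum} then becomes the conjunction of the lower identity
\[
  {\lint}_{[x,z]}\Low f\,d\Leb{x}{z}
    = {\lint}_{[x,y]}\Low f\,d\Leb{x}{y} + {\lint}_{[y,z]}\Low f\,d\Leb{y}{z}
\]
with the matching identity for the upper integrals against $\neg\Leb{}{}$. (That the three integrals involved are genuine Dedekind reals, so that ``coordinatewise'' makes sense, is Theorem~\ref{thm:2Int}.)

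For the lower identity I would use two general facts, each essentially immediate from the construction recalled in Section~\ref{sec:Int}. The first is \emph{functoriality in the space}: for $i\colon X\to Y$ one has ${\lint}_X (h\circ i)\,d\mu = {\lint}_Y h\,d(\Val i(\mu))$, which drops out of the reformulation ${\lint}_X g\,d\mu = {\lint}_{\overrightarrow{[0,\infty]}}\Id\,d(\Val g(\mu))$ once one uses $\Val(h\circ i)=\Val h\circ\Val i$. Applying it to the inclusions of $[x,y]$ and $[y,z]$ into $[x,z]$ rewrites the right-hand integrals as ${\lint}$'s over $[x,z]$ against $\Val i_{xy}(\Leb{x}{y})$ and $\Val i_{yz}(\Leb{y}{z})$. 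The second is \emph{additivity in the valuation}: ${\lint}_X g\,d(\mu_1+\mu_2) = {\lint}_X g\,d\mu_1 + {\lint}_X g\,d\mu_2$. This holds because the Choquet approximants split, $\Lintapprox_{\mu_1+\mu_2}(r_i)_i = \Lintapprox_{\mu_1}(r_i)_i + \Lintapprox_{\mu_2}(r_i)_i$ (pushforward along $g$ is linear in the valuation, and refining keeps each summand monotone as in Remark~\ref{rem:LowInt}), and the directed supremum of a termwise sum of two monotone families of lower reals is the sum of the suprema. Combining these with the first displayed equality of Proposition~\ref{prop:LebIntervals} (read over $[x,z]$; the proof there is a pointwise check on rational intervals and applies unchanged) yields the lower identity.

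The upper identity runs along exactly the same lines, using the second displayed equality of Proposition~\ref{prop:LebIntervals}, functoriality of $\Coval$, and additivity of the upper integral in the covaluation. This last ingredient is where the real care is needed, and I expect it to be the main obstacle. The upper integral is an \emph{infimum} of the approximants $\Uintapprox$ over sequences $0=r_0<\cdots<r_n$ constrained by $X\leq f^\ast\overleftarrow{[0,r_n)}$, so one must first know that a single $r_n$ works uniformly over $[x,z]$ — which comes from compactness of $[x,z]$ together with Lemma~\ref{lem:compSup} — and, since $\Uintapprox$ involves products $(r_i-r_{i-1})\,\nu(\ldots)$ with $\nu$ an upper real, one must invoke boundedness of the finite covaluations $\neg\Leb{}{}$ in order to split $\Uintapprox_{\nu_1+\nu_2}$ and to take filtered infima termwise, avoiding the $0\cdot\infty$ pathology flagged in Remark~\ref{rem:UpInt}. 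Once additivity of both one-sided integrals in their (co)measures is in hand, Proposition~\ref{prop:LebIntervals} closes the argument with no further work.
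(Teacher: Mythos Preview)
Your proposal is correct and follows essentially the same route as the paper: reduce the two-sided identity to the pair of one-sided identities~\eqref{eq:lintsum} and~\eqref{eq:uintsum}, then derive each from Proposition~\ref{prop:LebIntervals}. The paper states those one-sided identities without further comment, whereas you spell out the two ingredients behind them (functoriality of $\lint$/$\uint$ in the space via $\Val$/$\Coval$, and additivity in the (co)valuation), together with the boundedness caveat from Remark~\ref{rem:UpInt}; that extra detail is sound and is exactly what the paper is tacitly relying on. One small wrinkle: Proposition~\ref{prop:LebIntervals} is phrased with inclusions $i_{xy},i_{yz},i_{xz}$ into $\Reals$, not into $[x,z]$, so your ``read over $[x,z]$'' step should really be ``push everything along the $i$'s to a common ambient space and compare there'' --- but this changes nothing substantive.
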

\begin{proof}
  We deduce from Proposition~\ref{prop:LebIntervals} that if $f\colon [x,z] \to \overrightarrow{[0,\infty]}$, then
  \begin{equation} \label{eq:lintsum}
    {\lint}_{[x,z]} f\circ i_{xz} d\Leb{x}{z}
      = {\lint}_{[x,y]} f\circ i_{xy} d\Leb{x}{y}
        + {\lint}_{[y,z]} f\circ i_{yz} d\Leb{y}{z}
    \text{,}
  \end{equation}
  and if $f\colon [x,z] \to \overleftarrow{[0,\infty)}$, then
  \begin{equation} \label{eq:uintsum}
    {\uint}_{[x,z]} f\circ i_{xz} d\neg\Leb{x}{z}
      = {\uint}_{[x,y]} f\circ i_{xy} d\neg\Leb{x}{y}
        + {\uint}_{[y,z]} f\circ i_{yz} d\neg\Leb{y}{z}
    \text{.}
  \end{equation}

  The result follows by combining equations~\eqref{eq:lintsum} and~\eqref{eq:uintsum}.
\end{proof}

We complete this section by extending to integration over negative intervals:

\begin{theorem} \label{thm:RiemannNegInt}
  We can extend the definition of $\int_{x}^{y} f(t) dt$ to arbitrary $x,y$, in such a way that
  $\int_{x}^{y} f(t) dt = -\int_{y}^{x} f(t) dt$
  and equation~\eqref{eq:intsum} holds for all $x,y,z$.
\end{theorem}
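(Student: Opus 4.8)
The plan is to extend the integral to decreasing intervals by fiat and then glue, using the pushout decomposition of $\Reals^2$ from Corollary~\ref{cor:R2Pushout}. Let $f\colon\Reals\to\Reals$. For a pair $(x,y)$ with $y\le x$ I would simply \emph{define} $\int_x^y f(t)\,dt := -\int_y^x f(t)\,dt$, using the existing integral over the increasing interval $[y,x]$. These two prescriptions agree on the diagonal $x=y$: there $\Leb{x}{x}$ has total mass $\Low(x\tminus x)=0$, so (by the construction of the two-sided integral) both the lower and upper integrals vanish and $\int_x^x f(t)\,dt = 0 = -\int_x^x f(t)\,dt$. Since $\mathord{\le}\cap\mathord{\ge}$ is exactly the diagonal, Corollary~\ref{cor:R2Pushout} assembles the two halves into a single map $(x,y)\mapsto\int_x^y f(t)\,dt$ on $\Reals^2$. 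Antisymmetry $\int_x^y f = -\int_y^x f$ is then automatic: both sides are maps on $\Reals^2$, and they agree on $\mathord{\le}$ (on the right, $\int_y^x f$ unwinds by definition of the extension to $-\int_x^y f$) and on $\mathord{\ge}$ (dually), hence are equal by the pushout.

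For interval additivity I would first prove the special case: \emph{for $x\le z$ and arbitrary $y$}, $\int_x^z f = \int_x^y f + \int_y^z f$. Working over the base $\mathord{\le}\subseteq\Reals^2$ of pairs $x\le z$, the $y$-line decomposes as the iterated pushout of the three closed pieces $(-\infty,x]$, $[x,z]$, $[z,\infty)$ glued at $x$ and at $z$; this follows from Theorem~\ref{thm:RealPushout} applied twice — split $\Reals$ at $x$, then split the upper ray at $z$, the latter by pulling the ``split at $z$'' pushout back along $[x,\infty)\hookrightarrow\Reals$ (legitimate since that pushout is stable under pullback) — together with the fact that colimits compose. On the middle piece $x\le y\le z$ the identity is exactly Corollary~\ref{cor:RiemannAddIntervals}; on $y\le x\le z$ it follows from $\int_y^z f = \int_y^x f + \int_x^z f$ (Corollary~\ref{cor:RiemannAddIntervals}) plus antisymmetry; on $x\le z\le y$ it follows symmetrically from $\int_x^y f = \int_x^z f + \int_z^y f$. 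As the two sides of the claimed equation agree on each of the three pieces of a pushout cover, they are equal.

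Finally I would drop the hypothesis $x\le z$. Pulling the $\Reals^2$-pushout of Corollary~\ref{cor:R2Pushout} back along the projection $\Reals^3\to\Reals^2$, $(x,y,z)\mapsto(x,z)$, exhibits $\Reals^3$ as the pushout of $\{x\le z\}$ and $\{z\le x\}$ glued along $\{x=z\}$. On $\{x\le z\}$ the additivity equation is the special case just proved; on $\{z\le x\}$ it follows by applying that special case to the triple $z\le x$ with middle $y$, i.e. $\int_z^x f = \int_z^y f + \int_y^x f$, and negating, using antisymmetry three times. Hence the equation holds on all of $\Reals^3$, which (with the antisymmetry already in hand) completes the theorem.

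The hard part is not any computation but getting the point-free bookkeeping right around the three-way position of $y$ relative to $[x,z]$: classically one would just say $y$ lies left of, inside, or right of the interval, but here $[x,z]$ is a varying closed subspace and that trichotomy is not literally valid pointwise, so it has to be routed through the iterated, pullback-stable pushout built from Theorem~\ref{thm:RealPushout} (and, for the outer reduction, the pullback of Corollary~\ref{cor:R2Pushout} to $\Reals^3$). Once those gluings are in place, every remaining step is a one-line application of Corollary~\ref{cor:RiemannAddIntervals} and antisymmetry.
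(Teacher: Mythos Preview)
Your argument is correct, but the paper takes a noticeably different route for the additivity part. You and the paper agree on the extension step: define $\int_x^y f = -\int_y^x f$ on $\mathord{\geq}$, check agreement at $x=y$, and glue via Corollary~\ref{cor:R2Pushout}. For equation~\eqref{eq:intsum}, however, the paper avoids your iterated three-piece pushout entirely. Instead it observes that if the equation holds for one triple $(x,y,z)$ then, by antisymmetry, it holds for every permutation of that triple; hence on $\Rat^3$, where the order is decidable, some permutation is monotone and Corollary~\ref{cor:RiemannAddIntervals} applies. The set of triples satisfying~\eqref{eq:intsum} is a closed subspace of $\Reals^3$ containing the dense image of $\Rat^3$, so it is everything.

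Your approach stays entirely within the pushout machinery already set up, which has a certain internal coherence and makes every case-split explicit; the cost is the bookkeeping you flag in your last paragraph (the parametrized three-way split of the $y$-line, and the pullback of Corollary~\ref{cor:R2Pushout} to $\Reals^3$). The paper's density argument is shorter and sidesteps all of that by exporting the case analysis to the rationals, where trichotomy is unproblematic, and then closing up. It is worth knowing both: the density trick (closed subspace containing a dense set) is a recurring move elsewhere in the paper (e.g.\ Theorem~\ref{thm:powerdiff}, Proposition~\ref{prop:logEq}).
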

\begin{proof}
  If $y\leq x$ then we must define $\int_x^y f(t)dt = -\int_y^x f(t) dt$.
  On the overlap case $x=y$, then these agree, giving 0.
  We can now apply Corollary~\ref{cor:R2Pushout}.

  Now we check that equation~\eqref{eq:intsum} holds for arbitrary $x,y,z$.

  First, if it holds for a triple $(x,y,z)$, then it is easily checked that it holds for all permutations of it. 

  Next, if $x$, $y$ and $z$ are rational, then the equation holds: for, by decidability of order on the rationals, there is some permutation that puts them in order.

  Finally, the subspace of $\Reals^3$ comprising those triples for which the equation holds is closed. Since it includes the image of the dense map from $\Rat^3$, it is the whole of $\Reals^3$.
\end{proof}

\subsection{The uniform probability valuation
  \texorpdfstring{$\Unif{x}{y}$}{upsilon\_xy}
  on \texorpdfstring{$[x,y]$}{[x,y]}}
\label{sec:Uniform}
\begin{definition} \label{def:Unif}
    Let $x\leq y$ be Dedekind reals.
    Then the uniform valuation $\Unif{x}{y}$ is defined by
    \[
    q < \Unif{x}{y}(a,b) \text{ if }
    q(y-x) < \Leb{x}{y}(a,b)
    \text{ or } (a < x \leq y < b \text{ and } q<1)
    \text{.}
    \]
\end{definition}

\begin{proposition} \label{prop:uniform}
  $\Unif{x}{y}$ is a probability valuation, and
  \[
    (y-x) \Unif{x}{y} = \Leb{x}{y}
    \text{.}
  \]
\end{proposition}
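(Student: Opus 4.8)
The plan is to read Definition~\ref{def:Unif} as a single formula covering two geometrically distinct situations: the generic one, $x<y$, in which $\Unif{x}{y}$ is merely a positive rescaling of the Lebesgue valuation, and the degenerate one, $x=y$, in which it is the Dirac point-mass at $x$. Regarding the argument $(x,y)$ as a point of the subspace $\mathord{\le}$ of $\Reals^2$, the open sublocus $\{x<y\}$ has closed complement $\{x=y\}$, so by Theorem~\ref{thm:cases} it is enough to establish the proposition over each of these separately; here one uses that both ``$\Unif{x}{y}$ is a valuation'' and ``$(y-x)\Unif{x}{y}=\Leb{x}{y}$'' define subspaces of $\mathord{\le}$ (rephrasing ``valuation'' through finitely many rational intervals via Lemma~\ref{lem:RVal}, and observing that equalities, modular sums, and directed suprema of lower reals are all geometric).

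On $\{x<y\}$ the scalar $y-x$ is a strictly positive Dedekind real, and the second disjunct of Definition~\ref{def:Unif} is then redundant: it only applies when $a<x\le y<b$, and there $[x,y]\subseteq(a,b)$ forces $\Leb{x}{y}(a,b)=\Low(\min(b,y)\tminus\max(a,x))=\Low(y-x)$, so ``$q(y-x)<\Leb{x}{y}(a,b)$'' already says ``$q<1$''. Hence on $\{x<y\}$ the rational-interval values prescribed by Definition~\ref{def:Unif} are exactly $(y-x)^{-1}\Leb{x}{y}(a,b)$, that is, the rational-interval values of the valuation $(y-x)^{-1}\Leb{x}{y}$ on $[x,y]$ (a positive Dedekind-scalar multiple of a valuation); so the hypotheses of Lemma~\ref{lem:RVal} are inherited, $\Unif{x}{y}=(y-x)^{-1}\Leb{x}{y}$, and the scaling identity is immediate. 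On $\{x=y\}$ one computes $\Leb{x}{x}(a,b)=\Low(\min(b,x)\tminus\max(a,x))=0$ for all rational $a<b$, so $\Leb{x}{x}$ is the zero valuation, while with $y-x=0$ the first disjunct of Definition~\ref{def:Unif} never fires, leaving $\Unif{x}{x}(a,b)=1$ if $a<x<b$ and $0$ otherwise — the rational-interval values of the Dirac valuation $\delta_x$, a probability valuation (given by the unit of the $\Val$-monad). Since multiplication by the Dedekind scalar $0$ sends every non-negative lower real to $0$, we get $(y-x)\Unif{x}{x}=0=\Leb{x}{x}$. Gluing by Theorem~\ref{thm:cases} then yields the scaling identity and that $\Unif{x}{y}$ is a valuation for all $x\le y$.

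That $\Unif{x}{y}$ is a \emph{probability} valuation follows uniformly, with no case split: choosing rationals $a<x$ and $y<b$ we have $[x,y]\subseteq(a,b)$, so $(a,b)$ restricts to the top open of $[x,y]$ and $\Unif{x}{y}([x,y])=\Unif{x}{y}(a,b)$; Definition~\ref{def:Unif} makes this equal to $1$, since its second disjunct puts every rational $q<1$ below $\Unif{x}{y}(a,b)$, while for $q\ge1$ the second disjunct fails and the first would demand $q(y-x)<\Leb{x}{y}(a,b)=\Low(y-x)$, impossible as $q(y-x)\ge y-x$.

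The one real obstacle is the fibre $x=y$ — precisely the case that is \emph{not} a rescaled $[0,1]$, so it has to be handled on its own terms: there $\Leb{x}{x}$ collapses to the zero valuation whereas $\Unif{x}{x}$ must survive as a probability valuation, which is why both disjuncts of Definition~\ref{def:Unif} are genuinely load-bearing, and why one must be mindful that the values $\Unif{x}{y}(a,b)$ are read as non-negative lower reals (negative rationals automatically below them) so that Definition~\ref{def:Unif} still names genuine points of $\overrightarrow{[0,\infty]}$ on that fibre. Everything else — verifying the hypotheses of Lemma~\ref{lem:RVal}, and checking that the two displayed properties are the kind of subspace condition Theorem~\ref{thm:cases} is about — is routine point-free bookkeeping.
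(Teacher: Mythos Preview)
Your proposal is correct and follows essentially the same line as the paper's own proof: invoke Theorem~\ref{thm:cases} to split into the open case $x<y$ (where the second disjunct of Definition~\ref{def:Unif} is redundant and $\Unif{x}{y}=(y-x)^{-1}\Leb{x}{y}$) and its closed complement $x=y$ (where $\Unif{x}{x}$ is the point-mass on $\{x\}\cong 1$). You supply considerably more detail---in particular, the explicit check that the second disjunct is subsumed by the first when $x<y$, and the remark that the properties in question cut out subspaces so that Theorem~\ref{thm:cases} is applicable---which the paper leaves implicit.
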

\begin{proof}
  Using Theorem~\ref{thm:cases} it suffices to check the cases $x<y$ and $x=y$.
  
  If $x<y$ then the second alternative in the definition is redundant, and we just get $\Unif{x}{y}=\Leb{x}{y}/(y-x)$, which is clearly a probability valuation.

  If $x=y$ then we get the unique probability valuation on $\{x\}\cong 1$.
\end{proof}

\section{The Fundamental Theorem of Calculus}
\label{sec:FTC}
The FTC has two parts, essentially to show that integration is the inverse of differentiation.

The first part (Theorem~\ref{thm:FTC1}) shows that if $f(x)$ is defined as an integral $\int_{x_0}^{x} g(t)dt$ then $f$ is differentiable, with derivative $g$.

The second part (Theorem~\ref{thm:FTC2}) shows that if $f$ is differentiable, then it can be recovered (up to constant) as the integral of its derivative.

\subsection{First Fundamental Theorem of Calculus}
\label{sec:FTC1}
Suppose $f(x)$ is defined as an integral $\int_{x_0}^{x} g(t)dt$.
The slope map \emph{for the case} $x\ne y$ is
\[
  \slope{f}(x,y) = \frac{\int_{x}^{y}g(t)dt}{y-x}
  \text{,}
\]
and our task geometrically is to find a uniform definition that includes the case $x=y$.
To do this we reinterpret the RHS of the above equation in a crucial way, as $\int_{[x,y]} gd\Unif{x}{y}$,
where $\Unif{x}{y}$ is the uniform measure.
By Proposition~\ref{prop:uniform} this is the same value as before if $x\ne y$; but if $x=y$ the uniform measure (on the one-point space $\{x\}\cong 1$) is still well defined, and the integral evaluates as $g(x)$.
Since $\Unif{x}{y}$ is defined \emph{geometrically} from $x$ and $y$, we have our required $\slope{f}(x,y)$, and have proved differentiability.

\begin{theorem} \label{thm:FTC1}
	Let $f$ be defined on some real open interval by
	\[
      f(x) = \int_{x_0}^x g(t)dt
	  \text{.}
	\]
	Then $f$ is differentiable, with derivative $f'=g$.
\end{theorem}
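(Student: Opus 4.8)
The plan is to verify directly that the candidate slope map
\[
  \slope{f}(x,y) = {\int}_{[x,y]} g \, d\Unif{x}{y}
\]
(defined for $x \leq y$, and extended to arbitrary $x,y$ by the antisymmetry $\slope{f}(x,y) = \slope{f}(y,x)$, which is consistent on the overlap $x=y$ via Corollary~\ref{cor:R2Pushout}) is continuous and satisfies the Carath\'eodory identity $f(y) - f(x) = (y-x)\slope{f}(x,y)$, and then to read off $f'(x) = \slope{f}(x,x) = g(x)$.

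First I would establish continuity of $\slope{f}$. For $x \leq y$, the assignment $(x,y) \mapsto ([x,y], \Unif{x}{y})$ is a continuous map into $\sum_{K \oftype \PV\Reals} \Val K$, as discussed in Section~\ref{sec:ValRealInt} (using the Heine--Borel map $\HBC$ and the tautologous bundle), and Theorem~\ref{thm:2Int} gives the 2-sided integral as a continuous function of the compact space, the finite valuation, and the integrand; composing, $\slope{f}$ restricted to $\mathord{\leq}$ is continuous. Negation on $\Reals$ is continuous, so the extension to $\mathord{\geq}$ is too, and Corollary~\ref{cor:R2Pushout} glues these into a continuous map on all of $\Reals^2$ provided the two definitions agree on the diagonal --- which they do, both giving ${\int}_{\{x\}} g \, d\Unif{x}{x} = g(x)$ since $\Unif{x}{x}$ is the unique probability valuation on $\{x\} \cong 1$.

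Next I would prove the Carath\'eodory identity. By Proposition~\ref{prop:uniform}, $(y-x)\Unif{x}{y} = \Leb{x}{y}$, and since scaling a finite valuation by a non-negative Dedekind constant scales the 2-sided integral (this should follow from the corresponding fact for lower and upper integrals, together with the complement operation $\neg$ commuting with scaling), we get, for $x \leq y$,
\[
  (y-x)\,\slope{f}(x,y) = (y-x){\int}_{[x,y]} g \, d\Unif{x}{y}
    = {\int}_{[x,y]} g \, d\Leb{x}{y}
    = {\int}_x^y g(t)\,dt
    = f(y) - f(x)
    \text{,}
\]
where the last equality is the interval additivity equation~\eqref{eq:intsum} applied to $x_0, x, y$ (rearranged via Theorem~\ref{thm:RiemannNegInt}): $f(y) - f(x) = \int_{x_0}^y g - \int_{x_0}^x g = \int_x^y g$. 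This identity is symmetric under swapping $x,y$ (both sides negate), so by Corollary~\ref{cor:R2Pushout} it holds on all of $\Reals^2$. Setting $y = x$ gives $f'(x) = \slope{f}(x,x) = g(x)$ as a continuous map, so $f$ is ($C^1$-)differentiable with $f' = g$.

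The main obstacle I anticipate is the scaling lemma $(y-x){\int}_{[x,y]} g\,d\Unif{x}{y} = {\int}_{[x,y]} g\,d\Leb{x}{y}$ and, more delicately, ensuring it is valid \emph{uniformly} across the degenerate case $x=y$: when $x=y$ the factor $y-x$ is $0$ and $\Leb{x}{x}$ is the zero valuation, so the equation reads $0 = 0$, but one must check that the 2-sided integral construction of Theorem~\ref{thm:2Int} genuinely behaves continuously as the valuation degenerates, rather than arguing separately in the two cases and patching --- the point-free subtlety flagged in Section~\ref{sec:ValRealInt} that $[x,y]$ need not be homeomorphic to $[0,1]$. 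I expect this is handled cleanly by noting that multiplication-by-$(y-x)$ is a continuous operation on finite valuations commuting with $\Val f$, $\Coval f$ and $\neg$, so the identity ${\int}_X (f)\,d(c\mu) = c\,{\int}_X f\,d\mu$ for Dedekind $c \geq 0$ follows from the definitions of the lower and upper integral approximants $\Lintapprox$ and $\Uintapprox$ (each term scales by $c$), with no case split needed; everything else is routine bookkeeping with the results already proved.
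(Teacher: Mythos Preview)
Your proposal is correct and takes essentially the same approach as the paper: define the slope map as $\int_{[x',y']} g\,d\Unif{x'}{y'}$ using the uniform probability valuation, so that the degenerate case $x=y$ automatically yields $g(x)$. The paper differs only cosmetically --- it writes the slope directly via $x'=\min(x,y)$, $y'=\max(x,y)$ rather than gluing with Corollary~\ref{cor:R2Pushout}, and it verifies the Carath\'eodory identity by the open/closed case-split $x\neq y$ versus $x=y$ (Theorem~\ref{thm:cases}) rather than through your scaling lemma; the obstacle you anticipate is already handled by Proposition~\ref{prop:uniform}, which establishes $(y-x)\Unif{x}{y}=\Leb{x}{y}$ uniformly.
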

\begin{proof}
	Define
	\[
	  \slope{f}(x,y)
	    =\int_{[x',y']} g d\Unif{x'}{y'}
	\]
	where $x'=\min(x,y)$ and $y'=\max(x,y)$.
	We wish to show that
	\[
	  f(y)-f(x) = \slope{f}(x,y)(y-x)
	  \text{.}
	\]
	The collection of pairs $(x,y)$ for which this holds is a subspace, so it suffices to check the cases $x\ne y$ and the closed complement $x=y$.
	The case $x=y$ is obvious, as both sides of the equation are 0.
	
	If $x\ne y$ then either $x<y$ or $y<x$.
	In the first case,
	\[ \begin{split}
	  \slope{f}(x,y) (y-x)
	    &=\int_{[x,y]} g d\Unif{x}{y}(y-x)
	    = \int_{[x,y]} g d \Leb{x}{y}
	    = \int_x^y g(t) dt
	  \\
	    &= \int_{x_0}^y g(t)dt - \int_{x_0}^x g(t)dt
	    = f(y)-f(x)
	  \text{.}
	\end{split} \]
    The second case, $y<x$, is similar but negated.
    
    This shows that $f$ is differentiable.
    its derivative is
    \[
      f'(x) = \slope{f}(x,x)
        =\int_{[x,x]} g d\Unif{x}{x} = g(x)
      \text{.}
    \]
\end{proof}

\subsection{Second Fundamental Theorem of Calculus} \label{sec:FTC2}

\begin{theorem} \label{thm:FTC2}
  Let $f\colon U \to \Reals$ be differentiable, defined on an open interval $U$ of $\Reals$.
  Then, for all $x,y$ in $U$,
  \[
    f(y)-f(x) = \int_x^y f'(u) du
    \text{.}
  \]
\end{theorem}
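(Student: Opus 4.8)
The plan is to reduce the Second FTC to the First FTC plus the already-established uniqueness properties of differentiable maps. Given $f\colon U \to \Reals$ differentiable with slope map $\slope{f}$, define a new function $h$ on $U$ by picking a basepoint $x_0 \in U$ and setting $h(x) = \int_{x_0}^x f'(u)\,du$. By Theorem~\ref{thm:FTC1}, $h$ is differentiable with $h' = f'$ (note $f'$ is continuous, being $\slope{f}(x,x)$, so it is a legitimate integrand, and the integral is defined for arbitrary endpoints by Theorem~\ref{thm:RiemannNegInt}). The goal is then to show $h - f$ is constant on $U$, which combined with the interval additivity $\int_x^y = \int_{x_0}^y - \int_{x_0}^x$ (Theorem~\ref{thm:RiemannNegInt}, equation~\eqref{eq:intsum}) yields $f(y) - f(x) = h(y) - h(x) = \int_x^y f'(u)\,du$.

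So the crux is the following lemma, which I would state and prove first: if $g\colon U \to \Reals$ is differentiable with $g' = 0$ identically on a real open interval $U$, then $g$ is constant. In the Carath\'eodory framework this is cleaner than the classical mean-value-theorem argument: $g' = 0$ means $\slope{g}(x,x) = 0$ for all $x$, but we actually want $\slope{g}(x,y) = 0$ for all $x, y$, which would give $g(y) - g(x) = (y-x)\cdot 0 = 0$ immediately. To get from the diagonal vanishing to vanishing everywhere, I would fix $x$ and consider the continuous map $y \mapsto \slope{g}(x,y)$; alternatively, and more robustly, I would argue that the subspace of $U \times U$ on which $\slope{g}(x,y) = 0$ is closed and contains the diagonal, then use a connectedness/density argument on the interval — but density of the diagonal in $U^2$ is false, so this needs care. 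The safe route is to invoke a point-free version of the fact that a $C^1$ function with everywhere-zero derivative on a connected open set is locally constant hence constant; in the geometric setting, local constancy plus connectedness of the interval $U$ does the job, and connectedness of real intervals is available.

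Actually, I expect the smoothest argument avoids even this: apply Theorem~\ref{thm:FTC1} directly to $f'$ to get that $h(x) = \int_{x_0}^x f'(u)\,du$ satisfies $\slope{h}(x,y) = \int_{[x',y']} f'\,d\Unif{x'}{y'}$ with $x' = \min(x,y)$, $y' = \max(x,y)$, and separately observe that $f$ has its own slope map $\slope{f}$ with $\slope{f}(x,x) = f'(x)$. The function $g = f - h$ then has $\slope{g} = \slope{f} - \slope{h}$ with $\slope{g}(x,x) = 0$ for all $x$. The key step is to upgrade this to $\slope{g} \equiv 0$; I would do this by fixing $x$, working in the sheaf topos over that point, and showing the single-variable differentiable function $g_x(y) = g(y)$ has derivative zero on the connected interval $U$, hence is constant, hence $g(y) = g(x)$ for all $y$, hence $\slope{g}(x,y)(y-x) = 0$ and in particular $g$ is globally constant. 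The main obstacle is making the ``zero derivative on a connected interval implies constant'' step rigorous point-free/geometrically: classically it rests on the mean value theorem, which is not obviously geometric, so I would instead lean on the integral representation itself — if $g' = 0$ then by the First FTC applied to the constant-zero integrand, $\int_{x}^{y} g'(u)\,du = \int_{x}^{y} 0\,du = 0$, and the task is precisely to show this integral computes $g(y) - g(x)$, which is circular unless broken by a direct estimate. I anticipate the cleanest resolution is the estimate already embedded in the proof of Theorem~\ref{thm:2Int}: bounding $\slope{g}(x,y) = \int_{[x',y']} g'\, d\Unif{x'}{y'}$ by the sup of $|g'|$ over $[x',y']$ and noting this sup can be made arbitrarily small near the diagonal by continuity of $g'$, forcing $\slope{g}(x,y) = 0$ whenever $\slope{g}$ vanishes on the diagonal — but turning ``arbitrarily small near the diagonal'' into a genuine proof of vanishing is exactly where connectedness of the interval must enter, and that is the step I would spend the most care on.
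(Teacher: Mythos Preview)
Your overall strategy matches the paper's exactly: fix a basepoint $x_0$, define $h(x) = \int_{x_0}^x f'(u)\,du$, use Theorem~\ref{thm:FTC1} to get $h' = f'$, deduce $(f-h)' = 0$, and conclude that $f - h$ is constant on $U$. You also correctly identify the crux as the implication ``$g' = 0$ on a connected interval $\Rightarrow$ $g$ is constant.''

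The gap is that you do not actually close that implication. You cycle through several candidate arguments --- density of the diagonal in $U\times U$ (which you rightly reject), a direct connectedness argument, and a sup-estimate on $\slope{g}$ --- and correctly flag each as incomplete or circular. The paper resolves this step in a single sentence by invoking a corollary of the point-free Rolle's Theorem proved in~\cite{CViet}: that result is exactly what gives ``zero derivative implies constant'' geometrically, and it is the missing ingredient in your proposal. Your instinct that the classical Mean Value Theorem ``is not obviously geometric'' is sound as a caution, but the resolution is not to avoid it --- it is that a point-free version has already been established elsewhere and can simply be cited.
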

\begin{proof}
  Fixing $x_0\oftype U$, let us define
  \[
    g_{x_0}(x) = \int_{x_0}^x f'(t)dt
    \text{.}
  \]

  By the first part, we know that $g_{x_0}$ is differentiable, with derivative $g'_{x_0}=f'$,
  and it follows that $(f-g_{x_0})' = 0$.
  Then as a corollary to Rolle's Theorem~\cite{CViet} it follows that $f-g_{x_0}$ is constant on $U$,
  the constant being $f(x_0)-g_{x_0}(x_0)=f(x_0)$.
  Hence $f(x)-g_{x_0}(x)=f(x_0)$ for all $x_0$ and $x$, which gives us the required result. 
\end{proof}

\section{Differentiating powers} \label{sec:DiffPower}

We first address the case of $x^\alpha$, where $\alpha\oftype\Reals$ is fixed, and $x>0$.

\begin{theorem} \label{thm:powerdiff}
  If $\alpha\oftype\Reals$, then the map $x\mapsto x^\alpha$ is differentiable, with derivative $x \mapsto \alpha x^{\alpha-1}$.
\end{theorem}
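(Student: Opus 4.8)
The plan is to reduce the statement to the case that has already been handled, namely the Fundamental Theorem of Calculus together with the known differentiability of the logarithm. I would first recall from \cite{NgVic:PtfreeRE} that $x^\alpha = \exp(\alpha \ln x)$ for $x > 0$, where $\exp = (\ln)^{-1}$ is the inverse of the natural logarithm $\ln y = \int_1^y dt/t$. Then $\ln$ is differentiable by Theorem~\ref{thm:FTC1}, with $\ln'(y) = 1/y$, which is everywhere nonzero on $(0,\infty)$; hence by Lemma~\ref{lem:invdiff}, $\exp$ is differentiable with $\exp'(u) = 1/\ln'(\exp u) = \exp u$. The map $u \mapsto \alpha u$ is differentiable (constant times the identity, with slope $\alpha$), so by the chain rule the composite $x \mapsto \exp(\alpha \ln x)$ is differentiable, being a composite of differentiable maps.

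Next I would compute the derivative by composing the slope maps. Writing $\ell = \ln$, the slope of $x \mapsto \alpha \ell(x)$ is $\alpha\, \slope{\ell}(x,y)$, and the slope of $\exp$ at $(\alpha\ell(x), \alpha\ell(y))$ is $\slope{\exp}(\alpha\ell(x),\alpha\ell(y))$; so the chain rule gives
\[
  \slope{(x^\alpha)}(x,y)
    = \slope{\exp}\bigl(\alpha\ell(x),\alpha\ell(y)\bigr)\,\alpha\,\slope{\ell}(x,y).
\]
Evaluating on the diagonal $x = y$ yields the derivative $\exp'(\alpha\ln x)\cdot\alpha\cdot\ell'(x) = \exp(\alpha\ln x)\cdot\alpha\cdot(1/x) = \alpha x^\alpha / x = \alpha x^{\alpha-1}$, using the algebraic law $x^\alpha/x = x^{\alpha - 1}$ from \cite{NgVic:PtfreeRE}. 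This is the claimed derivative.

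The main obstacle I anticipate is not the computation but making sure each ingredient is available in the point-free, geometric setting with the precise hypotheses required: I must confirm that $\exp$ and $\ln$ are genuinely mutually inverse \emph{as maps between open subspaces of $\Reals$} (so that Lemma~\ref{lem:invdiff} applies — $\ln\colon (0,\infty)\to\Reals$ and $\exp\colon\Reals\to(0,\infty)$), that $\ln' = 1/x$ is nonzero everywhere on the domain (immediate since $x > 0$), and that $x \mapsto \alpha \ln x$ has image contained in the domain of $\exp$, which is all of $\Reals$, so no restriction issue arises. One should also note that $x \mapsto \alpha x$ is differentiable with the obvious slope map $\slope{(\alpha\,\Id)}(x,y) = \alpha$, which follows from the listed rules for scalar multiples (product of the constant $\alpha$ with $\Id$). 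Provided the identity $x^\alpha = \exp(\alpha\ln x)$ and the derivative $\ln'(x) = 1/x$ are cited cleanly, the rest is a routine assembly of the chain rule and Lemma~\ref{lem:invdiff}.
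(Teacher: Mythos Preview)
Your argument is correct in outline, but it takes a genuinely different route from the paper's. The paper places this theorem \emph{before} the section on logarithms and exponentials: it first treats rational exponents directly --- explicit slope maps for $x^n$ and $x^{-n}$, then $x^{1/n}$ via Lemma~\ref{lem:invdiff}, then products of exponents via the chain rule --- and only then extends to real $\alpha$ by a density argument (the equalizer of $x^\alpha$ and $1+\alpha\int_1^x t^{\alpha-1}\,dt$ is closed and contains $\Rat$), finishing with FTC1. Your approach instead leans on $x^\alpha = \exp(\alpha\ln x)$ and the chain rule, which is shorter but requires the differentiability of $\exp$ and $\ln$ as prior input; in the paper's ordering those are proved \emph{after} Theorem~\ref{thm:powerdiff}, so your proof would force a reordering of the development (there is no circularity, since the paper's proofs of Theorems~\ref{thm:logdiff} and~\ref{thm:expdiff} do not use Theorem~\ref{thm:powerdiff}).

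Two small points to tighten. First, the identity $x^\alpha = \exp(\alpha\ln x)$ is not purely from \cite{NgVic:PtfreeRE}: here $\ln$ is \emph{defined} as the integral (Definition~\ref{def:ln}), and its identification with $\log_e$ --- hence the identity you want --- comes from Corollary~\ref{cor:lnlog} in this paper. Second, Lemma~\ref{lem:invdiff} needs $\slope{\ln}(x,y)\ne 0$ for \emph{all} pairs $(x,y)$, not just on the diagonal; you only check $\ln'(x)=1/x\ne 0$. The off-diagonal case is easy (the slope is an integral of the positive integrand $1/t$ against a probability valuation, exactly as the paper argues in the proof of Theorem~\ref{thm:expdiff}), but it should be said.
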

\begin{proof}
  We first cover three special cases for rational $\alpha$, relating to a natural number $n$.
  
  If $\alpha=n$, then we can calculate
  \[
    \slope{\left( x \mapsto x^n \right)}(x,y)
      = \sum_{i=0}^{n-1} y^{n-1-i}x^i
    \text{,}
  \]
  so the derivative is $nx^{n-1} = \alpha x^{\alpha-1}$.
  
  If $n\geq 1$, then $\slope{\left( x \mapsto x^n \right)}(x,y)$ is positive for all $x,y$ (which are assumed positive).
  It follows from Lemma~\ref{lem:invdiff} that its inverse
  $x \mapsto x^{1/n}$ is differentiable, with derivative
  \[
    \frac{1}{n \left( x^{1/n)}\right)^{n-1}}
      = \frac{1}{n} x^{\frac{1}{n}-1}
    \text{.}
  \]
  This covers the case $\alpha = 1/n$.
  
  For negative integers $\alpha = -n$ we have
  \[
    y^{-n}-x^{-n} = -\frac{y^n-x^n}{x^n y^n}
      = -(y-x)\frac{\sum_{i=0}^{n-1} y^{n-1-i}x^i}{x^n y^n}
    \text{,}
  \]
  so
  \[
    \slope{\left( x \mapsto x^{-n} \right)}(x,y)
      = -\frac{\sum_{i=0}^{n-1} y^{n-1-i}x^i}{x^n y^n}
  \]
  and the derivative is $(-n)x^{n-1-2n} = \alpha x^{\alpha-1}$.
  
  We now observe that if $\alpha$ and $\beta$ both have the property expressed in the statement, then so does $\alpha\beta$.
  By the chain rule  the map $x\mapsto x^{\alpha\beta} = \left( x^\alpha\right)^\beta$ is differentiable, with derivative at $x$ equal to
  \[
    \beta \left( x^\alpha \right)^{\beta-1} \alpha x^{\alpha-1}
      = \alpha\beta x^{\alpha\beta - 1}
    \text{.}
  \]
  
  It follows, based on the special cases, that $\alpha$ has the property of the statement for all \emph{rational} $\alpha$.
  Hence for rational $\alpha$, by the FTC (Theorem~\ref{thm:FTC2}),
  \[
    x^\alpha = 1 + \alpha\int_{1}^{x} t^{\alpha-1} dt
    \text{.} 
  \]
  
  Fixing $x$, the two sides of the above equation define two maps on $\alpha$, and their equalizer, a closed subspace of $\Reals$, includes $\Rat$ and hence is the whole of $\Reals$.
  We can now apply the FTC again to conclude the statement of the theorem for arbitrary $\alpha$.
\end{proof}

\section{Differentiating logarithms and exponentials} \label{sec:DiffLogExp}

First, we define the natural logarithm $\ln x$ as the integral $\int_1^x \frac{dt}{t}$.
Then some familiar manipulations show that $\log_\gamma x = \ln x/\ln \gamma$, so $\log_\gamma$ can be got as an integral.
Now (Theorem~\ref{thm:logdiff}) the Fundamental Theorem of Calculus (FTC) tells us that $\log_\gamma x$ is differentiable, with derivative $((\ln \gamma)x)^{-1}$.
Let us write $\exp_{\gamma}$ for the map $x \mapsto \gamma^x$.
Since (at least for $\gamma \ne 1$) it is the functional inverse of $\log_\gamma$,
an argument (Theorem~\ref{thm:expdiff}) involving
the chain rule tells us that it too is differentiable, with derivative $(\ln \gamma) \exp_{\gamma}$.

\begin{definition}
\label{def:ln}
  If $0 < x$ then the \emph{natural logarithm} $\ln x$ is defined as
  \[
    \ln x = \int_1^x \frac{dt}{t}
    \text{.}
  \]
\end{definition}
  
$\ln$ is strictly increasing, $\ln 1 = 0$, and $\ln 4 > \frac 1 2 + \frac 1 3 + \frac 1 4 > 1$, so we can define \emph{Euler's number,} $e$, as the unique positive real such that $\ln e = 1$.
  
\begin{proposition} \label{prop:logEq}
  Let $f$ be a monoid homomorphism from $(0,\infty)$ under multiplication to $\Reals$ under addition.
  Then $f(\gamma^x) = xf(\gamma)$ for all $x\oftype\Reals$ and $\gamma\oftype(0,\infty)$.
  
  If $\gamma \ne 1$ then $f(x) = f(\gamma)\log_\gamma x$.
\end{proposition}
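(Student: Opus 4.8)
The plan is to reduce the first identity to the classical fact that a (necessarily continuous, here) additive endomorphism of $\Reals$ is multiplication by a scalar, and then to obtain the second identity by substituting $\log_\gamma x$ for $x$.

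First I would fix $\gamma\oftype(0,\infty)$ and consider the map $g\colon\Reals\to\Reals$, $g(x)=f(\gamma^x)$. Using the exponential laws $\gamma^0=1$, $\gamma^1=\gamma$ and $\gamma^{x+y}=\gamma^x\gamma^y$ established in~\cite{NgVic:PtfreeRE}, together with the fact that $f$ is a monoid homomorphism (so $f(1)=0$ and $f(ab)=f(a)+f(b)$), one reads off $g(0)=0$, $g(1)=f(\gamma)$ and $g(x+y)=g(x)+g(y)$ for all $x,y\oftype\Reals$. Thus $g$ is a continuous additive endomorphism of $\Reals$.

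Next I would show $g(x)=g(1)\,x$. An easy induction on additivity gives $g(p)=p\,g(1)$ for $p\oftype\Nat$, and $g(-x)=-g(x)$ follows from $g(x)+g(-x)=g(0)=0$; hence for $p/q\oftype\Rat$ we have $q\,g(p/q)=g(p)=p\,g(1)$, so $g$ agrees with $x\mapsto g(1)\,x$ on every rational. The equalizer of these two maps is a closed subspace of $\Reals$ containing the image of the dense map $\Rat\to\Reals$, hence is all of $\Reals$ --- exactly the pattern already used in the proofs of Theorems~\ref{thm:RiemannNegInt} and~\ref{thm:powerdiff}. Therefore $f(\gamma^x)=g(x)=x\,f(\gamma)$, which is the first claim.

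For the second claim I would restrict to the open subspace on which $\gamma\ne 1$ (that is, $\gamma<1\vee\gamma>1$), where $\log_\gamma$ is defined and satisfies $\gamma^{\log_\gamma x}=x$ for $x>0$ by~\cite{NgVic:PtfreeRE}. Applying the first claim with the real $\log_\gamma x$ in place of $x$ then gives
\[
  f(x)=f\bigl(\gamma^{\log_\gamma x}\bigr)=(\log_\gamma x)\,f(\gamma)=f(\gamma)\log_\gamma x
  \text{.}
\]
There is no serious obstacle; the only step that needs any care is the passage from rational to real $x$ in the first claim, but since all maps here are automatically continuous and $\Rat$ is dense in $\Reals$, the closed-equalizer argument handles it just as in the earlier proofs. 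The one hypothesis-sensitive point worth flagging is that $\log_\gamma$ and the identity $\gamma^{\log_\gamma x}=x$ are only available once $\gamma\ne 1$, which is precisely why the second statement carries that restriction.
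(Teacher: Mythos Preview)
Your proof is correct and follows essentially the same route as the paper: establish $f(\gamma^x)=xf(\gamma)$ first for rationals by elementary algebra with the homomorphism and exponential laws, then extend to all of $\Reals$ via the closed-equalizer-plus-density argument, and finally obtain the second identity by substituting $\log_\gamma x$ using that $\exp_\gamma$ and $\log_\gamma$ are mutual inverses when $\gamma\ne1$. Your packaging of the first step as ``$g$ is an additive endomorphism of $\Reals$'' is a tidy way to phrase it, but the underlying computation and the passage from $\Rat$ to $\Reals$ are the same as in the paper.
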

\begin{proof}
  Once we have the first equation, the second follows from the fact that $\exp_{\gamma}$ is inverse (as map) to $\log_\gamma$. It remains to prove the first.
  
  For natural numbers $x$ we use induction, and then for non-negative rationals $x=a/b$ we have
  \[
    f(\gamma^{\frac{a}{b}})
      = a f(\gamma^{\frac{1}{b}})
      = \frac a b \left(b f(\gamma^{\frac{1}{b}}) \right)
      = \frac a b f(\gamma)
    \text{.}
  \]
  To extend this to negative rationals, we have
  $0 = f(\gamma^x \gamma^{-x}) = f(\gamma^x)+f(\gamma^{-x})$,
  so $f(\gamma^{-x}) = -f(\gamma^x) = -x f(\gamma)$.
  
  Now, fixing $\gamma$, we have two maps $f_1(x)=xf(\gamma)$ and $f_2(x) = f(\gamma^x)$ that agree on all rationals.
  Their equalizer is a closed subspace of $\Reals$, and by the density of the rationals it is the whole of $\Reals$.
\end{proof}

\begin{lemma}\label{lem:lnHom}
	$\ln$ is a homomorphism in the sense of Proposition~\ref{prop:logEq}.
\end{lemma}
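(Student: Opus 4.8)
The claim is that $\ln$, defined by $\ln x = \int_1^x \frac{dt}{t}$ on $(0,\infty)$, is a monoid homomorphism from $\bigl((0,\infty),\cdot,1\bigr)$ to $(\Reals,+,0)$, i.e.\ that $\ln 1 = 0$ (immediate from the definition, since $\int_1^1 = 0$) and $\ln(xy) = \ln x + \ln y$ for all positive $x,y$. The plan is to prove the multiplicative law by fixing $x\oftype(0,\infty)$ and comparing the two maps $y\mapsto\ln(xy)$ and $y\mapsto\ln x+\ln y$ on the open interval $(0,\infty)$, via their derivatives; this keeps the argument inside the differential machinery already developed (FTC1, the chain rule, and the Rolle corollary used for Theorem~\ref{thm:FTC2}), rather than invoking a change-of-variables formula for integrals that the paper has not set up.

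First I would record the derivative of $\ln$ itself: by Theorem~\ref{thm:FTC1} applied to $\ln x = \int_1^x\frac{dt}{t}$, the map $\ln$ is differentiable on $(0,\infty)$ with $\ln'(x)=1/x$. Next, fixing $x\oftype(0,\infty)$ and working in $\mathcal{S}(0,\infty)$, I consider the scaling map $m_x\colon(0,\infty)\to(0,\infty)$, $m_x(y)=xy$; being linear it is differentiable with $\slope{(m_x)}(y,y')=x$, so $m_x'(y)=x$. By the chain rule, $y\mapsto\ln(xy)=(\ln\circ m_x)(y)$ is differentiable with derivative $\ln'(xy)\cdot m_x'(y)=\frac{1}{xy}\cdot x=\frac1y$. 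Since $\ln'(y)=\frac1y$ as well, the map $h(y)=\ln(xy)-\ln y$ is differentiable on $(0,\infty)$ with $h'\equiv 0$.

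Then I would invoke the corollary to Rolle's theorem from \cite{CViet} (the same one used in the proof of Theorem~\ref{thm:FTC2}): a map on a connected open interval with zero derivative is constant. Hence $h$ is constant on $(0,\infty)$, with value $h(1)=\ln(x\cdot1)-\ln 1=\ln x$. Therefore $\ln(xy)=\ln x+\ln y$ for all $y$, and since $x\oftype(0,\infty)$ was arbitrary, for all $x,y>0$. The only point needing care is the use of the "fixing a variable" device: fixing $x$ means carrying out the chain-rule computation and the constancy argument internally over $\mathcal{S}(0,\infty)$, which is legitimate precisely because both are geometric constructions, so they apply at the generic point $x$ and hence uniformly. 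I expect this bookkeeping — together with checking that $(0,\infty)$ qualifies as an open interval for the Rolle corollary — to be the only mild obstacle; the rest is routine given Theorems~\ref{thm:FTC1} and~\ref{thm:FTC2} and the chain rule.
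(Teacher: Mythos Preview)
Your argument is correct and complete: FTC1 gives $\ln'(x)=1/x$, the chain rule handles $y\mapsto\ln(xy)$, and the Rolle corollary (already invoked in Theorem~\ref{thm:FTC2}) forces $h(y)=\ln(xy)-\ln y$ to be constant on the open interval $(0,\infty)$. The fixing-of-$x$ bookkeeping is exactly as in Section~\ref{sec:Tricks}, and nothing circular is introduced since Lemma~\ref{lem:lnHom} sits after both FTC theorems.

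The paper takes a different route: it works directly with the integral, using the substitution $t'=t/\gamma$ to show $\int_\gamma^{\gamma\gamma'}\frac{dt}{t}=\ln\gamma'$ and then interval additivity (equation~\eqref{eq:intsum}) to assemble $\ln(\gamma\gamma')=\ln\gamma+\ln\gamma'$. That argument is shorter and avoids the differential machinery altogether, but---as you rightly note---it leans on a change-of-variables step that the paper has not explicitly set up in the point-free framework. Your approach trades brevity for staying strictly within results already proved (FTC1, chain rule, Rolle), which is arguably the more cautious choice here. Either way the content is the same classical computation; the difference is whether one trusts substitution in integrals or prefers to route through derivatives.
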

\begin{proof}
	Straight away, $\ln 1=0$. Then, by substituting $t'=t/\gamma$ and using equation~\eqref{eq:intsum}, we have
	\[
	  \int_\gamma^{\gamma\gamma'}\frac{dt}{t}
	    = \int_1^{\gamma'} \frac{dt'}{t'} = \ln\gamma'
	    \text{,}
	\]
	and
	\[
	  \ln(\gamma\gamma')
	    = \int_1^\gamma \frac{dt}{t} + \int_\gamma^{\gamma\gamma'}\frac{dt}{t}
	    = \ln\gamma + \ln\gamma'
	    \text{.}
	\]
\end{proof}

\begin{corollary} \label{cor:lnlog}
  $x\ln\gamma = \ln(\gamma^x)$ for all $x\oftype\Reals$ and $\gamma\oftype(0,\infty)$.

  If $\gamma \ne 1$ then $\ln x = \ln\gamma \log_\gamma x$.

  $\ln = \log_e$.
\end{corollary}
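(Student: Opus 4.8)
The plan is to derive the three assertions in sequence, each from the preceding material with essentially no new calculation. For the first equation, $x\ln\gamma = \ln(\gamma^x)$, I would simply invoke Proposition~\ref{prop:logEq} with $f = \ln$. Lemma~\ref{lem:lnHom} establishes exactly the hypothesis of that proposition (that $\ln$ is a monoid homomorphism from $((0,\infty),\cdot)$ to $(\Reals,+)$), so the first conclusion of Proposition~\ref{prop:logEq}, namely $f(\gamma^x) = x f(\gamma)$, reads $\ln(\gamma^x) = x\ln\gamma$, which is the claim (up to commuting the product).

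For the second assertion, assume $\gamma \ne 1$. Again Lemma~\ref{lem:lnHom} lets us apply the second conclusion of Proposition~\ref{prop:logEq}: $f(x) = f(\gamma)\log_\gamma x$, i.e.\ $\ln x = \ln\gamma\,\log_\gamma x$. One might note in passing that for $\gamma \ne 1$ we have $\ln\gamma \ne 0$ (since $\ln$ is strictly increasing with $\ln 1 = 0$, as recorded after Definition~\ref{def:ln}), so this can equivalently be written $\log_\gamma x = \ln x / \ln\gamma$; but that rearrangement is not needed for the statement as given.

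For the third assertion, $\ln = \log_e$, recall that $e$ is defined as the unique positive real with $\ln e = 1$; in particular $e \ne 1$. Applying the second assertion with $\gamma = e$ gives $\ln x = \ln e\,\log_e x = \log_e x$ for all $x > 0$, which is the desired equality of maps. The argument is routine throughout; if there is any subtlety, it is only in confirming that $\gamma \ne 1$ legitimately feeds into the second part of Proposition~\ref{prop:logEq} and that the case $\gamma = e$ satisfies this — both of which are immediate from the strict monotonicity of $\ln$ and $\ln 1 = 0$. No genuine obstacle is anticipated; this corollary is purely a matter of assembling Proposition~\ref{prop:logEq} and Lemma~\ref{lem:lnHom}.
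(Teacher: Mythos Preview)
Your proposal is correct and matches the paper's approach: the corollary is stated without proof precisely because it is immediate from combining Lemma~\ref{lem:lnHom} with Proposition~\ref{prop:logEq}, exactly as you describe. Your derivation of $\ln = \log_e$ from the second assertion with $\gamma = e$ is the intended reading.
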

Of course, we may also write $\exp x = e^x$.

\begin{theorem} \label{thm:logdiff}
  If $\gamma\ne 1$ then $\log_\gamma$ is differentiable, with derivative $x\mapsto \frac{1}{(\ln\gamma) x}$.
\end{theorem}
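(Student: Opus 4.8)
The plan is to reduce everything to the natural logarithm $\ln$ and then apply the First Fundamental Theorem of Calculus (Theorem~\ref{thm:FTC1}) to its integral definition. First I would invoke Corollary~\ref{cor:lnlog}: for $\gamma \ne 1$ we have $\log_\gamma x = \ln x / \ln\gamma$, so $\log_\gamma$ is just $\ln$ rescaled by the nonzero constant $1/\ln\gamma$. Since differentiability is preserved by multiplication by a constant (with $\slope{(cf)}(x,y) = c\,\slope{f}(x,y)$, an immediate instance of the product rule listed in Section~\ref{sec:Diff}), it suffices to show $\ln$ is differentiable with $\ln'(x) = 1/x$, and then the derivative of $\log_\gamma$ at $x$ is $\frac{1}{\ln\gamma}\cdot\frac{1}{x} = \frac{1}{(\ln\gamma)x}$ as claimed.

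Now $\ln x = \int_1^x \frac{dt}{t}$ by Definition~\ref{def:ln}, which is exactly of the form $\int_{x_0}^x g(t)\,dt$ with $x_0 = 1$ and $g(t) = 1/t$, a continuous map on the open interval $(0,\infty)$. So Theorem~\ref{thm:FTC1} applies directly and tells us $\ln$ is differentiable with $\ln' = g$, i.e.\ $\ln'(x) = 1/x$. Combining with the previous paragraph completes the proof. Concretely, one can record the slope map: $\slope{\log_\gamma}(x,y) = \frac{1}{\ln\gamma}\int_{[x',y']} \frac{1}{t}\,d\Unif{x'}{y'}$ with $x' = \min(x,y)$, $y' = \max(x,y)$, as supplied by the proof of Theorem~\ref{thm:FTC1}.

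I don't anticipate a genuine obstacle here — the theorem is essentially a corollary of material already established. The only point requiring a word of care is that $\frac{dt}{t}$ is being integrated over intervals $[x,y]$ that lie inside $(0,\infty)$, so that the integrand $t \mapsto 1/t$ is indeed a well-defined continuous map into $\Reals$ on the relevant interval; this is fine because $\ln$ is only defined for positive arguments, so any interval of integration arising in the slope map avoids $0$. A second, even more minor, point is that one should note $\ln\gamma \ne 0$ when $\gamma \ne 1$, which is recorded in the remark after Definition~\ref{def:ln} ($\ln$ is strictly increasing with $\ln 1 = 0$); this is what licenses dividing by $\ln\gamma$.
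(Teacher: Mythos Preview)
Your proof is correct and essentially matches the paper's. The paper's only cosmetic difference is that it absorbs the constant into the integrand first, writing $\log_\gamma x = \int_1^x \frac{dt}{(\ln\gamma)t}$ and then applying Theorem~\ref{thm:FTC1} directly, whereas you apply Theorem~\ref{thm:FTC1} to $\ln$ and scale afterward; both routes rest on Corollary~\ref{cor:lnlog} and the FTC in the same way.
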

\begin{proof}
  By Corollary~\ref{cor:lnlog} we have
  \[
    \log_\gamma x = \int_1^x \frac{dt}{(\ln \gamma) t}
    \text{,}
  \]
  so the result is immediate from the FTC, Theorem~\ref{thm:FTC1}.
\end{proof}

\begin{theorem} \label{thm:expdiff}
  If $\gamma\oftype(0,\infty)$, then $\exp_{\gamma}$ is differentiable, with derivative $(\ln \gamma)\exp_{\gamma}$.
\end{theorem}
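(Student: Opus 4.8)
The plan is to handle the case $\gamma \ne 1$ first, where $\exp_\gamma$ is a genuine bijective map with inverse $\log_\gamma$, and then extend to $\gamma = 1$ by a density/closed-subspace argument. For $\gamma \ne 1$: by Theorem~\ref{thm:logdiff}, $\log_\gamma$ is differentiable on $(0,\infty)$ with slope map $\slope{(\log_\gamma)}(u,v)$, and from the formula $\log_\gamma x = \int_1^x \frac{dt}{(\ln\gamma)t}$ together with the First FTC (Theorem~\ref{thm:FTC1}) we know the slope is $\int_{[u',v']} \frac{1}{(\ln\gamma)t}\,d\Unif{u'}{v'}$, which is strictly positive (the integrand $1/((\ln\gamma)t)$ is nowhere zero on $(0,\infty)$, being positive when $\ln\gamma>0$ and negative when $\ln\gamma<0$, and $\Unif{u'}{v'}$ is a probability valuation, so the integral is nonzero). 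Hence Lemma~\ref{lem:invdiff} applies: since $\log_\gamma$ and $\exp_\gamma$ are mutually inverse maps between the open subspaces $(0,\infty)$ and $\Reals$, and $\log_\gamma$ is differentiable with nonvanishing slope, $\exp_\gamma$ is differentiable with $\exp_\gamma'(x) = 1/\log_\gamma'(\exp_\gamma x)$.

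Next I would identify this derivative explicitly. By Theorem~\ref{thm:logdiff}, $\log_\gamma'(w) = \frac{1}{(\ln\gamma)w}$, so $\log_\gamma'(\exp_\gamma x) = \frac{1}{(\ln\gamma)\gamma^x}$, and therefore $\exp_\gamma'(x) = (\ln\gamma)\gamma^x = (\ln\gamma)\exp_\gamma(x)$, as required. This disposes of the statement for all $\gamma \ne 1$.

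For the remaining case $\gamma = 1$: here $\exp_1$ is the constant map $x \mapsto 1^x = 1$, which is differentiable with derivative $0$; and $\ln 1 = 0$ by Definition~\ref{def:ln} (or Lemma~\ref{lem:lnHom}), so $(\ln 1)\exp_1 = 0$ as well, and the statement holds trivially. Alternatively, to present a single uniform argument: fixing $x\oftype\Reals$, the maps $\gamma \mapsto \slope{(\exp_\gamma)}(x,x)$ (once we have established it is continuous in $\gamma$, which follows from the geometric definition of the slope via the inverse-function construction) and $\gamma \mapsto (\ln\gamma)\gamma^x$ agree on the open subspace $\{\gamma \ne 1\}$, whose closure is all of $(0,\infty)$ since $1$ is in the closure of $(0,\infty)\setminus\{1\}$; hence they agree everywhere. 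Either way completes the proof.

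The main obstacle I expect is not the case $\gamma = 1$ in isolation but ensuring the argument is properly \emph{geometric} and uniform in $\gamma$: Lemma~\ref{lem:invdiff} is stated for a fixed pair of mutually inverse maps, so to apply it with $\gamma$ as a parameter one should fix $\gamma\oftype(0,\infty)$ (working in $\baseS(0,\infty)$, as in the ``fixing a variable'' discussion of Section~\ref{sec:Tricks}), check that the hypotheses of Lemma~\ref{lem:invdiff} hold internally there, and note that the slope map produced is geometric in all of $x$, $y$, $\gamma$ jointly. Care is also needed that ``$\slope{f}(x',y')\ne 0$ for all $x',y'$'' in Lemma~\ref{lem:invdiff} is genuinely met — this is where the positivity/negativity of $1/((\ln\gamma)t)$ and the fact that $\Unif{x'}{y'}$ has total mass $1$ (Proposition~\ref{prop:uniform}) do the real work, including in the degenerate case $x'=y'$ where the integral is just $1/((\ln\gamma)x')$, still nonzero.
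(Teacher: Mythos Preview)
Your treatment of $\gamma \ne 1$ matches the paper's: identify $\slope{(\log_\gamma)}$ via Theorem~\ref{thm:logdiff} and FTC1 as a nowhere-zero integral, then invoke Lemma~\ref{lem:invdiff} to get $\exp_\gamma'(x) = (\ln\gamma)\gamma^x$.

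The gap is in the passage to all $\gamma$. Neither of your proposals produces a \emph{single geometric construction} of $\slope{(\exp_\gamma)}(x,y)$ valid for the generic $\gamma$ in $(0,\infty)$. The ``direct'' argument exhibits a slope for $\exp_1$ in isolation (the zero map), but the slope coming from Lemma~\ref{lem:invdiff}, namely $1/\slope{(\log_\gamma)}(\gamma^x,\gamma^y)$, is undefined at $\gamma=1$ since $\log_1$ does not exist; so you have two separate constructions on an open/closed pair. Theorem~\ref{thm:cases} lets you case-split to verify membership in a \emph{subspace}, not to \emph{define a map} --- there is no open/closed pushout analogous to Theorem~\ref{thm:RealPushout} available here. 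Your density alternative has the same defect: it presupposes that $\slope{(\exp_\gamma)}$ is already a map on all of $(0,\infty)$, which is precisely what is in question. (Your final paragraph correctly senses this obstacle but does not overcome it: fixing $\gamma$ in $\baseS(0,\infty)$ does not help, because the hypotheses of Lemma~\ref{lem:invdiff} fail for the generic point there.)

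The paper's fix is to pass through an \emph{equation}. From differentiability on $\gamma\ne 1$ and FTC2 (Theorem~\ref{thm:FTC2}) one obtains
\[
  \gamma^x \;=\; 1 + (\ln\gamma)\int_0^x \gamma^t\,dt
  \text{.}
\]
The $\gamma$'s satisfying this form a closed subspace; it contains the open $\gamma\ne 1$ and trivially the closed complement $\gamma=1$, so by Theorem~\ref{thm:cases} it is all of $(0,\infty)$. Now apply FTC1 (Theorem~\ref{thm:FTC1}) to the right-hand side: the FTC1 slope construction via $\Unif{x}{y}$ is geometric in all parameters, so it yields a slope map for $\exp_\gamma$ uniformly in $\gamma$. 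This round trip --- FTC2 to turn differentiability into an equation, Theorem~\ref{thm:cases} to close the gap at $\gamma=1$, FTC1 to recover a uniform slope --- is the idea your argument is missing.
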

\begin{proof}
  First, suppose $\gamma\ne 1$, so that
  the maps $\exp_\gamma$ and $\log_\gamma$ are mutually inverse.
  From Theorem~\ref{thm:logdiff} and the FTC we have that
  \[
    \slope{\log}_\gamma(x,y) = \frac{\int_{[x',y']} \left(t \mapsto \frac{1}{t}\right)d\Unif{x'}{y'}}{\ln\gamma}
    \text{,}
  \]
  where $x'=\min(x,y)$ and $y'=\max(x,y)$.
  This is non-zero, because the integrand is positive and $\ln\gamma$ is non-zero,
  so by Lemma~\ref{lem:invdiff} it follows that $\exp_\gamma$ is differentiable and $\exp'_{\gamma}(x) = (\ln \gamma) \exp'_{\gamma}(x)$.

  By the FTC, it follows that, for $\gamma\ne 1$,
  \[
    \gamma^x = 1+\ln \gamma \int_0^x \gamma^t dt
    \text{.}
  \]
  Let us fix $x$ and look at the space of those $\gamma$s for which the above equation holds, a (closed) subspace of $\Reals$. It contains the open subspace of all $\gamma\ne 1$; but trivially also contains its closed complement, for $\gamma=1$: hence it is the whole of $\Reals$.
  Applying the FTC again, we get the result for all $\gamma$.
\end{proof}

\section{Conclusions} \label{sec:Conc}

The broad message of this work is that integration is more fundamental than differentiation, providing a vital tool for defining the slope maps involved in differentiation.

For example, for the slope map for $\gamma^x$ we can quickly reduce to the question of defining the map $(\gamma^x-1)/x$, continuously at 0.
It is not obvious how one might do that geometrically. Our FTC sidesteps the problem by using an integral with respect to the uniform valuation.

Similarly, without integrals, we do not have an explicit expression for the slope map of $x^\alpha$ when $\alpha$ is irrational.

In their need for this, the slope maps $\slope{f}$ differ from the derivatives $f'$, which can be calculated in a compositional way from a definition of $f$.
In fact, this is what makes an effective differential \emph{calculus.}

The point-free account~\cite{Integration} of 1-sided integration, guided strongly by the recognition of 1-sided reals, is simple, but nonetheless very general.
Clearly it is assisted by two simplifying assumptions.

One is that all spaces are topological, and all measures are regular (valuations). For many practical purposes (as here), that does not seem much of a limitation, and it avoids many of the intricacies and foundational problems of measure theory. 

The second is the fact that all maps are continuous, which will appear restrictive to a mathematician happy to integrate discontinuous functions.
However, these are still accessible if one varies the spaces to give different topologies on classically equivalent sets of points.

For example, consider $H$, the unit step function at 0 on the reals. This can be decomposed as 
$\overrightarrow{H}\colon \overrightarrow{[-\infty, \infty]} \to \overrightarrow{[-\infty, \infty]}$, with $\overrightarrow{H}(0)=0$,
and $\overleftarrow{H}\colon \overleftarrow{[-\infty, \infty]} \to \overleftarrow{[-\infty, \infty]}$, with $\overleftarrow{H}(0)=1$.
Putting these together gives a map $H$ from $\Reals$ to
$\overrightarrow{[-\infty, \infty]} \times \overleftarrow{[-\infty, \infty]}$ whose image is always disjoint, but fails to be located at $H(0)$.
It seems plausible that out of these we can get a 2-sided integral with respect to Lebesgue valuations $\Leb{x}{y}$.

Even under these assumptions, there is still much work to be done in enabling point-free integration to take on the full role of classical integration. One huge gap is that of integration in differential manifolds.

\bibliographystyle{amsalpha}
\bibliography{MyBiblio}

\providecommand{\bysame}{\leavevmode\hbox to3em{\hrulefill}\thinspace}
\providecommand{\MR}{\relax\ifhmode\unskip\space\fi MR }
\providecommand{\MRhref}[2]{%
  \href{http://www.ams.org/mathscinet-getitem?mr=#1}{#2}
}
\providecommand{\href}[2]{#2}
\begin{thebibliography}{BGN04}

\bibitem[BGN04]{BertramGN:DiffCalcGBFR}
W.~Bertram, H.~Gl{\"{o}}ckner, and K.-H. Neeb, \emph{Differential calculus over
  general base fields and rings}, Expositiones Mathematicae \textbf{22} (2004),
  no.~3, 213--282.

\bibitem[CS09]{CoqSpit:IntVal}
Thierry Coquand and Bas Spitters, \emph{Integrals and valuations}, Journal of
  Logic and Analysis \textbf{1} (2009), no.~3, 1--22.

\bibitem[Joh85]{VietLoc}
P.T. Johnstone, \emph{Vietoris locales and localic semi-lattices}, Continuous
  Lattices and their Applications (R.-E. Hoffmann, ed.), Pure and Applied
  Mathematics, no. 101, Marcel Dekker, 1985, pp.~155--180.

\bibitem[NV22]{NgVic:PtfreeRE}
Ming Ng and Steven Vickers, \emph{Point-free construction of real
  exponentiation}, Logical Methods in Computer Science \textbf{18} (2022),
  no.~3, 15:1--15:32, DOI 10.46298/lmcs-18(3:15)2022.

\bibitem[Vic93]{Infosys}
Steven Vickers, \emph{Information systems for continuous posets}, Theoretical
  Computer Science \textbf{114} (1993), 201--229.

\bibitem[Vic97]{PowerPt}
\bysame, \emph{Constructive points of powerlocales}, Mathematical Proceedings
  of the Cambridge Philosophical Society \textbf{122} (1997), 207--222.

\bibitem[Vic99]{TopCat}
\bysame, \emph{Topical categories of domains}, Mathematical Structures in
  Computer Science \textbf{9} (1999), 569--616.

\bibitem[Vic04]{PPExp}
\bysame, \emph{The double powerlocale and exponentiation: A case study in
  geometric reasoning}, Theory and Applications of Categories \textbf{12}
  (2004), 372--422, Online at \url{http://www.tac.mta.ca/tac/index.html#vol12}.

\bibitem[Vic07]{SublocFT}
\bysame, \emph{Sublocales in formal topology}, Journal of Symbolic Logic
  \textbf{72} (2007), no.~2, 463--482.

\bibitem[Vic08]{Integration}
\bysame, \emph{A localic theory of lower and upper integrals}, Mathematical
  Logic Quarterly \textbf{54} (2008), no.~1, 109--123.

\bibitem[Vic09]{CViet}
\bysame, \emph{The connected {V}ietoris powerlocale}, Topology and its
  Applications \textbf{156} (2009), no.~11, 1886--1910.

\bibitem[Vic22]{Vickers:PtfreePtwise}
\bysame, \emph{Generalized point-free spaces, pointwise},
  \url{https://arxiv.org/abs/2206.01113}, 2022.

\end{thebibliography}
\end{document}